\newtheorem{theorem}{Theorem}[section]
\newtheorem{lemma}[theorem]{Lemma}
\newtheorem{proposition}[theorem]{Proposition}
\newtheorem{corollary}[theorem]{Corollary}
\newtheorem{conjecture}[theorem]{Conjecture}
\theoremstyle{definition}
\newtheorem{remark}[theorem]{Remark}
\numberwithin{equation}{section}
\newcommand{\ib}[1]{\bm{#1}}
\def\imod#1{\allowbreak\mkern5mu({\operator@font mod}\,\,#1)}
\begin{document}

\title[On sporadic sequences]{On sporadic sequences}

\author{Brendan Alinquant}
\author{Robert Osburn}

\address{School of Mathematics and Statistics, University College Dublin, Belfield, Dublin 4, Ireland}
\address{Theoretical Sciences Visiting Program, Okinawa Institute of Science and Technology Graduate University, Onna, 904-0495, Japan}

\email{robert.osburn@ucd.ie}
\email{brendan.alinquant@ucdconnect.ie}

\subjclass[2020]{11A07, 11B83}
\keywords{Sporadic sequences, supercongruences}

\date{\today}

\begin{abstract}
In this note, we prove the last remaining case of the original 15 two-term supercongruence conjectures for sporadic sequences. The proof utilizes a new representation for this sequence (due to Gorodetsky) as the constant term of powers of a Laurent polynomial.
\end{abstract}

\maketitle

\section{Introduction}

The original 15 sporadic sequences are integer solutions to specific three-term recurrences. The first six with labels $\bf{A}$, $\bf{B}$, $\bf{C}$, $\bf{D}$, $\bf{E}$ and $\bf{F}$ were found by Zagier \cite{zag}, the next six denoted $(\alpha)$, $(\gamma)$, $(\delta)$, $(\epsilon)$, $(\eta)$ and $(\zeta)$ were discovered by Almkvist and Zudilin \cite{az} while the final three $s_7$, $s_{10}$ and $s_{18}$ are due to Cooper \cite{cooper1}. The Ap{\'e}ry numbers for $\zeta(2)$ and $\zeta(3)$ are $\bf{D}$ and $(\gamma)$, respectively. These sequences are ``sporadic" in the sense that they are not terminating, polynomial, hypergeometric or Legendrian solutions \cite[Section 3]{zag}. Each of the 15 cases has a modular parametrization \cite[Tables 1--3]{cooper2}, binomial sum representation \cite[Tables 1 and 2]{oss} and geometric origin \cite{zag1}. One intriguing aspect of these sequences lies in their arithmetic properties. In \cite{os, oss}, the following two-term supercongruences were conjectured.

\begin{conjecture} \label{OSS} Let $A(n)$ be one of the 15 original sporadic sequences. Then, for all primes $p \geq 5$ and all integers $m$, $r \geq 1$,
\begin{equation*} \label{con}
A(mp^r) \equiv A(mp^{r-1}) \pmod{p^{\lambda r}}
\end{equation*}
where $\lambda = 3$ except in the cases $\bf{B}$, $\bf{C}$, $\bf{E}$, $\bf{F}$ and $s_{18}$ in which case $\lambda = 2$.
\end{conjecture}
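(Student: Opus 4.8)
The plan is to establish the single outstanding case of \eqref{con} directly from Gorodetsky's new representation $A(n) = \operatorname{CT}\bigl[P(\ib{x})^n\bigr]$, where $P = P(\ib{x}) \in \Z[x_1^{\pm 1}, \dots, x_d^{\pm 1}]$ is the explicit Laurent polynomial attached to the sequence and $\operatorname{CT}$ denotes extraction of the coefficient of $\ib{x}^{\ib{0}}$. The one structural fact I would use repeatedly is that the substitution $x_i \mapsto x_i^{p}$ preserves constant terms, so that $\operatorname{CT}\bigl[P(\ib{x}^p)^N\bigr] = A(N)$ for every $N \geq 0$. Writing $F = P(\ib{x})$, $F_\sigma = P(\ib{x}^p)$, and using the Frobenius congruence $F^p \equiv F_\sigma \pmod p$ to define $G \in \Z[x_1^{\pm 1}, \dots, x_d^{\pm 1}]$ by $F^p = F_\sigma + pG$, raising to the $mp^{r-1}$ power and expanding yields the basic identity
\[
A(mp^r) - A(mp^{r-1}) = \sum_{k \geq 1} \binom{mp^{r-1}}{k}\, p^k\, \operatorname{CT}\bigl[F_\sigma^{\,mp^{r-1}-k}\, G^{k}\bigr],
\]
since the $k = 0$ term is exactly $\operatorname{CT}[F_\sigma^{\,mp^{r-1}}] = A(mp^{r-1})$.

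First I would extract the generic (Gauss) divisibility. From $k\binom{N}{k} = N\binom{N-1}{k-1}$ one gets $v_p\binom{mp^{r-1}}{k} \geq (r-1) - v_p(k)$, so the $k$-th summand above has $p$-adic valuation at least $k + (r-1) - v_p(k) \geq r$, with equality only when $k = 1$. This recovers $A(mp^r) \equiv A(mp^{r-1}) \pmod{p^r}$, i.e. the $\lambda = 1$ level, for free. The point of the excerpt, however, is the sharp exponent $\lambda \in \{2,3\}$, and here the term-by-term bound is hopeless: reaching $p^{\lambda r}$ forces one to exhibit systematic cancellation among the mixed constant terms $\operatorname{CT}[F_\sigma^{\,mp^{r-1}-k}G^{k}]$ rather than to estimate them individually.

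The route I would take to the sharp modulus is a two-step reduction. First, I would propagate from the case $r = 1$: the constant-term sequence satisfies Dwork-type congruences in the sense of Mellit--Vlasenko and Samol--van Straten, and a Coster-type induction then upgrades an $r = 1$ supercongruence $A(mp) \equiv A(m) \pmod{p^\lambda}$ to $A(mp^r) \equiv A(mp^{r-1}) \pmod{p^{\lambda r}}$ for all $r \geq 1$. Second, I would prove the $r = 1$ statement from the representation: with $N = m$ the identity above reads $A(mp) - A(m) = \sum_{k \geq 1} \binom{m}{k} p^k \operatorname{CT}[F_\sigma^{m-k}G^k]$, so the task reduces to showing $\operatorname{CT}[F_\sigma^{m-1}G] \equiv 0 \pmod{p^{\lambda-1}}$ together with the analogous bounds for the few remaining low-order terms. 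I would attempt this by exhibiting a symmetry $\ib{x} \mapsto \tau(\ib{x})$ of $P$ that pairs the monomials contributing to these constant terms so as to force cancellation modulo $p$ (respectively $p^2$), in the spirit of the arguments underlying the Apéry supercongruences.

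I expect the main obstacle to be this last cancellation: identifying the precise symmetry of the new $P$ responsible for the extra $p$-divisibility and verifying that it yields $\lambda = 3$ rather than only $\lambda = 2$ in the case at hand. Equivalently, the difficulty is to pin down the arithmetic source of the sharp exponent --- most transparently via the modular parametrization recorded in \cite{cooper2}, which should relate $A(mp) - A(m)$ to the $p$-th Fourier coefficient of the associated modular form and let one read off $\lambda$ from its weight --- and to confirm that Gorodetsky's representation is compatible with the telescoping that globalizes the result from $r = 1$ to general $r$.
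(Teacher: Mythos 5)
Your outline stalls at its load-bearing step: the claim that ``a Coster-type induction then upgrades an $r=1$ supercongruence $A(mp)\equiv A(m) \pmod{p^{\lambda}}$ to $A(mp^r)\equiv A(mp^{r-1}) \pmod{p^{\lambda r}}$.'' No such general upgrade mechanism exists, and this is precisely why the case $(\delta)$ remained open: Amdeberhan and Tauraso \cite{at} proved the $r=1$ case in 2016, yet the full congruence \eqref{con} for $(\delta)$ did not follow. The Dwork-type congruences of Mellit--Vlasenko and Samol--van Straten only yield the $\lambda=1$ level $A(mp^r)\equiv A(mp^{r-1})\pmod{p^r}$ --- exactly what your Gauss-divisibility computation already recovers --- and Coster's induction applies to sums of a special binomial shape (it handles $\bf{A}$, $\bf{D}$, $(\gamma)$, $s_{10}$), a shape the representation \eqref{az} does not have because of the $3^{n-3k}$ factor. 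Your fallback for $r=1$ is likewise a placeholder: you posit an unspecified symmetry $\tau$ of $P$ forcing cancellation in $\operatorname{CT}[F_\sigma^{m-1}G]$ modulo $p^{2}$, but exhibiting such a symmetry and extracting $\lambda=3$ rather than $\lambda=2$ is the entire difficulty, and the modular-weight heuristic you mention cannot substitute for it (as the paper notes, there is currently no general framework explaining these supercongruences).

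The paper avoids the $r=1$-to-general-$r$ propagation altogether and works directly at level $r$, using not the constant-term identity abstractly but Gorodetsky's explicit expansion \eqref{delta} of $A_{\delta}(mp^r)$ as a signed sum of products of four multinomial coefficients over $U(mp^r)$. The tuples divisible by $p$ are matched with $U(mp^{r-1})$ term by term via Gessel's congruence (Lemma \ref{jg}, with modulus $p^{r+s+\min(r,s)}$ --- this, combined with the divisibility \eqref{easier}, is the actual source of the exponent $3r$; see Proposition \ref{first}). The tuples not divisible by $p$ are shown to contribute $0$ modulo $p^{3r}$: tuples with at least three non-divisible blocks vanish by \eqref{easier} alone, and the remaining sum is reduced by the symmetries \eqref{b1}--\eqref{b3} to six copies of the sum over $U_{\ib{a}\ib{b}}(mp^r)$, which is sliced into arithmetic-progression fibers $T_{s,\bm{\ell}}$ (Proposition \ref{UintoT}); on each fiber the inner sum collapses, via Lemmas \ref{l1}--\ref{l3}, to a multiple of $\sideset{}{'}\sum_{x=1}^{p^s-1}(-1)^x/x^2 \equiv 0 \pmod{p^s}$ (Lemma \ref{har}). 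So the cancellation you hoped to obtain from a symmetry of the Laurent polynomial is realized instead through a combinatorial decomposition of the index set together with an alternating Wolstenholme-type sum; without an analogue of that mechanism, your proposal does not close.
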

Conjecture \ref{OSS} has been established via the techniques in \cite{coster} for $\bf{A}$, $\bf{D}$, $(\gamma)$ and $s_{10}$, \cite{os2} for $\bf{C}$, \cite{os3} for $\bf{E}$ and $(\alpha)$, \cite{oss} for $(\epsilon)$, $(\eta)$, $s_7$ and $s_{18}$, \cite{gor1} for $(\zeta)$, \cite{gor2} for $\bf{B}$ and \cite{straub} for $\bf{F}$. This leaves the case $(\delta)$ which can be expressed as \cite{az}
\begin{equation} \label{az}
A_{\delta}(n) = \sum_{k=0}^{\left \lfloor \frac{n}{3} \right \rfloor} (-1)^k 3^{n-3k} \binom{n}{3k} \binom{n+k}{n} \frac{(3k)!}{k!^3}
\end{equation}
where  $\lfloor \,\,\, \rfloor$ is the usual floor function. The appearance of powers of 3 in (\ref{az}) causes difficulty. For example, after considerable work the $r=1$ case of Conjecture \ref{OSS} for $(\delta)$ was confirmed in \cite{at}. Fortunately, there is an alternative representation for $A_{\delta}(n)$ as the constant term of $\Lambda(x,y,z)^n$ where
\begin{equation*} \label{gord}
\Lambda(x,y,z) = \frac{(x+y-1)(x+z+1)(y-x+z)(y-z+1)}{xyz}.
\end{equation*}
Let $\ib{a}=(a_1, \dotsc, a_{\ell})$ be a tuple of nonnegative integers such that $a_1 + \dotsc + a_{\ell} = n$ and consider the multinomial coefficient
\begin{equation*}
\binom{n}{\ib{a}} = \binom{n}{a_1, \dotsc, a_{\ell}} := \frac{n!}{a_1 ! \cdots a_{\ell} !}.
\end{equation*}
In \cite[Proposition 3.3]{gor2}, Gorodetsky proved that 
\begin{equation} \label{delta}
A_{\delta}(n) = \sum_{(\ib{a}, \ib{b}, \ib{c}, \ib{d}) \, \in \, U(n)} (-1)^{a_2 + b_1 + d_3} \binom{n}{\ib{a}} \binom{n}{\ib{b}} \binom{n}{\ib{c}} \binom{n}{\ib{d}}   
\end{equation}
where
\begin{equation} \label{Uset}
U(n) = \left \{ (\ib{a}, \ib{b}, \ib{c}, \ib{d}) \in \mathbb{Z}_{\geq 0}^{12} \; : \;  \begin{matrix} a_1 + a_2 + a_3 = n, & b_1 + c_1 + d_1 = n \\ b_1 + b_2 + b_3 = n, & a_1 + b_2 + d_2 = n \\ c_1 + c_2 + c_3 = n, & a_2 + b_3 + c_2 = n \\ d_1 + d_2 + d_3 = n, & a_3 + c_3 + d_3 = n \end{matrix}     \right \}.
\end{equation}
Note that the final condition in (\ref{Uset}) is the result of taking the sum of the first four equations and then applying the fifth, sixth and seventh equations. We have added this extra equation in order to streamline the subsequent discussion. The purpose of this note is to utilize (\ref{delta}) and (\ref{Uset}) to resolve Conjecture \ref{OSS} in this last remaining case. Our main result is the following.

\begin{theorem} \label{main} For all primes $p \geq 5$ and integers $m$, $r \geq 1$, we have
\begin{equation} \label{Adelta} 
A_{\delta}(mp^r) \equiv A_{\delta}(mp^{r-1}) \pmod{p^{3r}}.
\end{equation}
\end{theorem}

The paper is organized as follows. In Section 2, we present some preliminaries, including two key steps in the proof of Theorem \ref{main}. The first step (see Proposition \ref{first}) reduces the proof to considering only those tuples in (\ref{delta}) which are not divisible by $p$ while the second step (see Proposition \ref{UintoT}) decomposes a certain subset of $U(mp^r)$ into disjoint unions of sets. In Section 3, we prove Theorem \ref{main}. In Section 4, we briefly address two nice questions posed by the referee. We make three final remarks. First, Straub \cite{straub} used (\ref{delta}) and (\ref{Uset}) (without the final condition) to prove (\ref{con}) for the case $(\delta)$ with $\lambda = 2$. Second, Cooper \cite{cooper2} has recently investigated sequences that are solutions to four-term recurrences and exhibit several novel features. For example, some of the sequences take values in $\mathbb{Z}[i]$ or $\mathbb{Z}[\sqrt{2}]$ and appear to satisfy ``rare" supercongruences \cite[Conjectures 11.1--11.5]{cooper2}. Lastly, there is currently no general framework which explains either Conjecture \ref{OSS} or \cite[Conjecture 1.3]{mos}. For a recent promising development in this direction, see \cite{bv}.

\section{Preliminaries}

We first recall the following result \cite[Theorem 2.2]{gessel}.

\begin{lemma} \label{jg} For primes $p \geq 5$ and integers $m$, $k$ and $r$, $s \geq 1$, 

\begin{equation*}
\binom{m p^r}{k p^s} / \binom{m p^{r-1}}{k p^{s-1}} \equiv 1 \pmod{p^{r+s+ \text{min}(r,s)}}.
\end{equation*}
\end{lemma}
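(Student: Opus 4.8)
The plan is to convert the quotient of binomial coefficients into a quotient of ``Wilson factorials'' and then extract powers of $p$ by means of the $p$-adic logarithm. Write $W(n) = \prod_{1\le j\le n,\ p\nmid j} j$ for the product of the units up to $n$, so that $n! = p^{\lfloor n/p\rfloor}\,\lfloor n/p\rfloor!\,W(n)$. Applying this once to each of the three factorials in $\binom{mp^r}{kp^s}=(mp^r)!/\bigl((kp^s)!\,(mp^r-kp^s)!\bigr)$, and using that $mp^r-kp^s = p\,(mp^{r-1}-kp^{s-1})$ is exactly divisible by $p$, the prime powers and the lower factorials collapse into $\binom{mp^{r-1}}{kp^{s-1}}$, leaving the clean identity
\[
\frac{\binom{mp^r}{kp^s}}{\binom{mp^{r-1}}{kp^{s-1}}} = \frac{W(mp^r)}{W(kp^s)\,W(mp^r-kp^s)}.
\]
A short Kummer/Legendre count (matching carries in base $p$) confirms that both binomials have the same $p$-adic valuation, so the quotient is a $p$-adic unit and the asserted congruence is meaningful. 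Equivalently one may phrase the right-hand side through Morita's $p$-adic Gamma function as $\Gamma_p(mp^r)/\bigl(\Gamma_p(kp^s)\,\Gamma_p(mp^r-kp^s)\bigr)$, which can help in the estimates below.

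Next I would analyse each $W(\ell p)$ for $\ell\in\{mp^{r-1},\,kp^{s-1},\,mp^{r-1}-kp^{s-1}\}$ by grouping the units into blocks of length $p$, writing $W(\ell p)=\bigl((p-1)!\bigr)^{\ell}\prod_{t=0}^{\ell-1}\prod_{i=1}^{p-1}\bigl(1+\tfrac{tp}{i}\bigr)$. Since $mp^{r-1} = kp^{s-1} + (mp^{r-1}-kp^{s-1})$, the factors $\bigl((p-1)!\bigr)^{\ell}$ cancel in the quotient, and taking the $p$-adic logarithm (each $1+tp/i$ lies in $1+p\mathbb{Z}_p$, so everything converges and $\log/\exp$ is an isometry there for $p\ge 5$) turns the products into the single sum
\[
\log\frac{\binom{mp^r}{kp^s}}{\binom{mp^{r-1}}{kp^{s-1}}} = \sum_{n\ge1}\frac{(-1)^{n-1}}{n}\,H^{(n)}_{p-1}\,p^{n}\bigl(P_n(a)-P_n(b)-P_n(c)\bigr),
\]
where $a=mp^{r-1}$, $b=kp^{s-1}$, $c=a-b$, and $H^{(n)}_{p-1}=\sum_{i=1}^{p-1}i^{-n}$, $P_n(\ell)=\sum_{t=0}^{\ell-1}t^{n}$. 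The problem then becomes bounding the $p$-adic valuation of this sum below by $r+s+\min(r,s)$, since $v_p(\log u)=v_p(u-1)$ for $u\in 1+p\mathbb{Z}_p$.

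The arithmetic inputs I would use are the Wolstenholme congruences $H^{(1)}_{p-1}\equiv 0\ (\mathrm{mod}\ p^2)$ and $H^{(2)}_{p-1}\equiv 0\ (\mathrm{mod}\ p)$ for $p\ge 5$, the fact that $p^{r-1}\mid a$, $p^{s-1}\mid b$ and $p^{\min(r,s)-1}\mid c$, and the observation that, because $a=b+c$, every difference $P_n(a)-P_n(b)-P_n(c)$ is divisible by $bc$ (for instance $P_1(a)-P_1(b)-P_1(c)=bc$ and $P_2(a)-P_2(b)-P_2(c)=bc\,(a-1)$). It is often cleaner to rewrite the sum by telescoping: with $L(t)=\sum_{i=1}^{p-1}\log(1+tp/i)$ and $a=b+c$ one has $\sum_{t=0}^{a-1}-\sum_{t=0}^{b-1}-\sum_{t=0}^{c-1}=\sum_{t=0}^{c-1}\bigl(L(t+b)-L(t)\bigr)$, exhibiting the logarithm as a first difference in steps of $b$ over the short range $c$, so that the valuation gained from the small step and the short range becomes visible.

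The hard part will be squeezing out the \emph{exact} exponent $r+s+\min(r,s)$ rather than a weaker one. A naive term-by-term estimate of the logarithmic sum falls short, because the leading contributions of several values of $n$ land at the same critical valuation and cancel: already for $r=s=2$ the $n=1$ and $n=2$ terms each have valuation one below the target and only their sum is divisible by the required power of $p$, and a second such cancellation occurs one layer deeper. Capturing this is the crux, and it amounts to a \emph{tower} of Wolstenholme-type congruences for the weighted sums $\sum_{j\le \ell p,\ p\nmid j} j^{-n}$ (equivalently, control of the valuations of the Taylor coefficients of $\log\Gamma_p$, which package these congruences uniformly in the spirit of Dwork). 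I would organise this by establishing the relevant congruences for these sums recursively in the number of block-refinements, feeding the bound at one scale into the next; this is precisely the point where the proof is genuinely delicate, which is why the statement is most efficiently invoked from \cite{gessel}.
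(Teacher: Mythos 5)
You should first be aware that the paper contains no proof of this lemma to compare against: it is quoted verbatim from Gessel \cite[Theorem 2.2]{gessel}, so your attempt must be judged on its own merits. Your reduction steps are correct as far as they go: the identity $n! = p^{\lfloor n/p\rfloor}\lfloor n/p\rfloor!\,W(n)$, the resulting clean quotient $\binom{mp^r}{kp^s}/\binom{mp^{r-1}}{kp^{s-1}} = W(mp^r)/\bigl(W(kp^s)\,W(mp^r-kp^s)\bigr)$, the block factorization of $W(\ell p)$, and the passage to the $p$-adic logarithm (valid for $p\ge 5$ since everything lies in $1+p\mathbb{Z}_p$) are all sound. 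But the proposal stops at exactly the decisive point. The entire content of the lemma is the lower bound $v_p \ge r+s+\min(r,s)$ for the logarithmic sum, and your own accounting shows the naive estimate fails: the $n=1$ term is bounded only by $v_p\bigl(p\,H^{(1)}_{p-1}\,bc\bigr) \ge 1+2+(s-1)+(\min(r,s)-1) = s+\min(r,s)+2$, which misses the target as soon as $r\ge 3$. You acknowledge that a ``tower of Wolstenholme-type congruences'' is needed and then write that the statement ``is most efficiently invoked from \cite{gessel}'' --- that is, at the crux you cite the very result you set out to prove. This is a genuine gap, not a deferred technicality: the recursive scale-by-scale cancellation you gesture at is the theorem.

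Separately, one of your stated arithmetic inputs is false. The divisibility $bc \mid P_n(a)-P_n(b)-P_n(c)$ (with $a=b+c$) holds as polynomial divisibility over $\mathbb{Q}$, since the difference vanishes at $b=0$ and at $c=0$, but not as integer divisibility once $n\ge 3$: for $n=3$, $b=c=2$, $a=4$ one has $P_3(4)-P_3(2)-P_3(2) = 36-1-1 = 34$, which is not divisible by $bc=4$. Worse for your purposes, the quotient polynomial has Faulhaber/Bernoulli coefficients whose denominators can be divisible by $p$ (von Staudt--Clausen, when $p-1$ divides the relevant index), so for $n$ near multiples of $p-1$ the claimed valuation from $P_n(a)-P_n(b)-P_n(c)$ degrades by a power of $p$; the prefactor $p^n$ eventually compensates, but that bookkeeping is precisely the uniform-in-$n$ estimate your sketch never supplies. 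In short: the framework is reasonable and the preliminary identities check out (including the Kummer-valuation remark making the congruence meaningful), but the proof is missing its core, and the argument as written is circular where it matters.
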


\begin{remark} \label{23} Lemma \ref{jg} also holds for primes $2$ and $3$ with the exponent replaced by $r+s + \text{min}(r,s) - 2$ and $r+s + \text{min}(r,s) - 1$, respectively.
\end{remark}

Note that if $p \nmid k$ and $s \leq r$, then

\begin{equation} \label{easier}
\binom{m p^r}{k p^s} = p^{r-s} \frac{m}{k} \binom{mp^r - 1}{k p^s - 1} \equiv 0 \pmod{p^{r-s}}.
\end{equation}

Let $\sum^{'}$ denote the sum over indices not divisible by $p$. 

\begin{lemma} \label{har} For primes $p \geq 3$ and integers $s \geq 0$,
\begin{equation*}
\sideset{}{'}\sum_{x=1}^{p^s - 1} \frac{(-1)^x}{x^2} \equiv 0 \pmod{p^s}.
\end{equation*}
\end{lemma}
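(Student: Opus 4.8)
The plan is to exploit the reflection $x \mapsto p^s - x$, which is a fixed-point-free involution of the summation range precisely because $p$ (and hence $p^s$) is odd. Under this reflection the summand essentially negates itself modulo $p^s$: the quadratic factor is symmetric, since $(p^s - x)^2 \equiv x^2 \pmod{p^s}$, whereas the sign reverses, because $x$ and $p^s - x$ have opposite parity and so $(-1)^{p^s - x} = -(-1)^x$. Pairing each term with its reflection should therefore force the sum to vanish modulo $p^s$, with the power $p^s$ emerging explicitly from the difference $(p^s - x)^2 - x^2$.

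First I would check that $\iota\colon x \mapsto p^s - x$ preserves the index set $\{\,1 \le x \le p^s - 1 : p \nmid x\,\}$: the range is clearly preserved, and $p \mid p^s$ gives $p \nmid (p^s - x)$ whenever $p \nmid x$. Since $x = p^s - x$ would require $2x = p^s$, which is impossible for odd $p^s$, the map is an involution with no fixed point, so the index set partitions into two-element orbits $\{x,\, p^s - x\}$. For each orbit I would combine the two terms over a common denominator,
\[
\frac{(-1)^x}{x^2} + \frac{(-1)^{p^s - x}}{(p^s - x)^2} = (-1)^x\,\frac{(p^s - x)^2 - x^2}{x^2 (p^s - x)^2} = (-1)^x\,\frac{p^s\,(p^s - 2x)}{x^2 (p^s - x)^2},
\]
where I used $(-1)^{p^s - x} = -(-1)^x$. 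Because $p \nmid x$ forces both $p \nmid (p^s - 2x)$ and $p \nmid (p^s - x)$, each orbit contributes $p^s$ times a rational number whose denominator is prime to $p$, hence a quantity $\equiv 0 \pmod{p^s}$. Summing over the orbits yields the lemma for $s \ge 1$, and the case $s = 0$ is an empty sum.

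On this route the only point demanding care is the sign identity $(-1)^{p^s - x} = -(-1)^x$, which is exactly where the hypothesis that $p$ is odd enters; everything else is formal, so I do not anticipate a genuine obstacle. The alternative indicated in the text — splitting the sum by the parity of $x$ and appealing to \cite[Lemma 2.2]{oss} — instead reduces the alternating sum to non-alternating quadratic harmonic sums. There the substantive input would be a Wolstenholme-type congruence valid to the full prime-power modulus $p^s$, and one must take care that the even-indexed part reduces (via $x = 2y$) to a sum over $1 \le y \le (p^s-1)/2$ rather than the full range; securing the congruence at that strength is the only place real content sits on that route. The reflection argument sidesteps this by manufacturing the factor $p^s$ directly.
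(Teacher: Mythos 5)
Your proof is correct, and it takes a genuinely different route from the paper's. The paper handles this lemma in one line: it splits the sum according to the parity of $x$ (so that, after the substitution $x=2y$, the even-indexed part becomes $\frac{1}{4}\sum 1/y^2$ over $1\le y\le (p^s-1)/2$ with $p\nmid y$) and then invokes \cite[Lemma 2.2]{oss}, a Wolstenholme-type congruence modulo $p^s$ for the resulting non-alternating quadratic harmonic sums --- exactly the external input and half-range subtlety you identified as the substantive content of that route. Your reflection argument instead pairs $x$ with $p^s-x$ under the fixed-point-free involution, and every step checks out: the involution preserves the condition $p\nmid x$, the sign identity $(-1)^{p^s-x}=-(-1)^x$ uses only that $p^s$ is odd, and each orbit contributes $(-1)^x\,p^s(p^s-2x)/\bigl(x^2(p^s-x)^2\bigr)$, which is $p^s$ times a $p$-integral rational (your remark that $p\nmid(p^s-2x)$ is true but not needed; only the denominator being prime to $p$ matters). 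As for what each approach buys: the paper's parity splitting is a one-line reduction to an existing lemma, but that lemma requires $p\ge 5$, and necessarily so --- for $p=3$, $s=1$ the non-alternating sum is $1+\frac{1}{4}=\frac{5}{4}\not\equiv 0 \pmod{3}$, while the alternating sum $-1+\frac{1}{4}=-\frac{3}{4}$ does vanish --- whereas your argument is self-contained, manufactures the factor $p^s$ explicitly from $(p^s-x)^2-x^2=p^s(p^s-2x)$, and proves the slightly stronger statement valid for every odd prime, including $p=3$.
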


\begin{proof}
The terms in the sum can be grouped into disjoint pairs $\{x, p^s - x\}$ as $1 \leq x \leq p^s - 1$ implies  $x \not \equiv p^s - x \pmod{p^s}$. As $p \nmid x$, each of these pairs contributes 

\begin{equation*}
\frac{(-1)^x}{x^2} + \frac{(-1)^{p^s - x}}{(p^s - x)^2} \equiv 0 \pmod{p^s}
\end{equation*}
and so the result follows.
\end{proof}

We now present the two key steps in the proof of Theorem \ref{main}. For $\beta \in \mathbb{R}$, we write $\beta \ib{a} = (\beta a_1, \dotsc, \beta a_{\ell})$. Given $(\ib{a}, \ib{b}, \ib{c}, \ib{d}) \in U(n)$, let

\begin{equation} \label{Bdef}
B(\ib{a}, \ib{b}, \ib{c}, \ib{d}) := (-1)^{a_2 + b_1 + d_3} \binom{n}{\ib{a}} \binom{n}{\ib{b}} \binom{n}{\ib{c}} \binom{n}{\ib{d}}. 
\end{equation}
For a vector $\ib{a}$ with component $a_i$ and prime $p$, we write $p \mid \ib{a}$ to mean $p \mid a_i$ for every $i$ and $p \nmid \ib{a}$ if $p \nmid  a_i$ for some $i$. Moreover, we write $p \mid (\ib{a}, \ib{b}, \ib{c}, \ib{d})$ if $p \mid \ib{a}$, $\ib{b}$, $\ib{c}$ and $\ib{d}$ and $p \nmid (\ib{a}, \ib{b}, \ib{c}, \ib{d})$ if $p$ does not divide at least one of $\ib{a}$, $\ib{b}$, $\ib{c}$, $\ib{d}$. Finally, we define $v_{p}(\ib{a}) = \text{min} \{ v_{p}(a_i) : i \}$. We thank Armin Straub for his permission to include the proof of the following result (cf. \cite[Section 3]{straub}).

\begin{proposition} \label{first} For all primes $p \geq 5$ and integers $m$, $r \geq 1$ and $(\ib{a}, \ib{b}, \ib{c}, \ib{d}) \in U(mp^r)$ with $p \mid (\ib{a}, \ib{b}, \ib{c}, \ib{d})$, 

\begin{equation} \label{dbyp}
B(\ib{a}, \ib{b}, \ib{c}, \ib{d}) \equiv B((\ib{a}, \ib{b}, \ib{c}, \ib{d})/p) \pmod{p^{3r}}.
\end{equation}
\end{proposition}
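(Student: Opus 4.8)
The plan is to pass to the difference and prove $v_p\bigl(B-B'\bigr)\ge 3r$, where $v_p$ denotes the $p$-adic valuation and $B=B(\ib a,\ib b,\ib c,\ib d)$, $B'=B((\ib a,\ib b,\ib c,\ib d)/p)$. Two preliminary observations make this clean. Since every entry is divisible by $p$, dividing by $p$ carries the tuple to an element of $U(mp^{r-1})$, and because $p$ is odd one has $(-1)^{a_2+b_1+d_3}=(-1)^{a_2/p+b_1/p+d_3/p}$, so the signs in (\ref{Bdef}) agree and play no role. By Kummer's theorem the four multinomials in $B$ and the corresponding ones in $B'$ have equal valuation termwise, whence $v_p(B)=v_p(B')=:V$ and
\begin{equation*}
v_p(B-B')=V+v_p\!\left(\frac{B}{B'}-1\right).
\end{equation*}
Thus everything reduces to the single inequality $V+v_p(B/B'-1)\ge 3r$.

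To control the ratio I would factor into binomial coefficients. Using the extra column relation recorded after (\ref{Uset}), $B$ is the product of the four multinomials attached to the triples $\{b_1,c_1,d_1\}$, $\{a_1,b_2,d_2\}$, $\{a_2,b_3,c_2\}$, $\{a_3,c_3,d_3\}$, each of whose parts sum to $n=mp^r$; writing each as $\binom{n}{x}\binom{n-x}{y}$ presents $B$ as a product of eight binomials $\beta_1,\dots,\beta_8$, with $B'=\prod_j\beta_j'$ the analogous product for the divided tuple. Lemma \ref{jg} gives $\beta_j/\beta_j'\equiv 1\pmod{p^{e_j}}$ with $e_j=\rho_j+\sigma_j+\min(\rho_j,\sigma_j)$, where $\rho_j,\sigma_j$ are the valuations of the top and bottom of $\beta_j$. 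Since $\beta_j$ and $\beta_j'$ have equal valuation, writing $B/B'=\prod_j\beta_j/\beta_j'$ and expanding yields $v_p(B/B'-1)\ge\min_j e_j$, and hence $v_p(B-B')\ge V+\min_j e_j$.

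It remains to establish $V+\min_j e_j\ge 3r$, and this is the heart of the matter. The two quantities compensate each other. When some bottom valuation $\sigma_j$ is small the gain $e_j$ can be as small as $3$, but then (\ref{easier}), applied to each of the four binomials whose top equals $n=mp^r$, forces carries: it gives $v_p(\beta_j)\ge r-\sigma_j$, and playing the row factorization of $B$ off against the column one sharpens the resulting lower bound for $V$. Conversely, when the entries are highly divisible by $p$ the carry count $V$ may collapse to $0$, but then each $\sigma_j$ is large and $e_j$ grows like $r+2\sigma_j$, which restores the estimate. I would make this quantitative with Kummer's theorem: since every row sum and every column sum equals $mp^r\equiv 0\pmod{p^r}$, in each of the lowest $r$ base-$p$ positions the three relevant digits either vanish together or generate a carry, and converting this dichotomy into a bound for $V$ in terms of the $\sigma_j$ is precisely what couples the two sides. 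The main obstacle is exactly this global carry-versus-gain bookkeeping: the inequality cannot be proved one multinomial at a time, since a single column may contribute far less than $3r$ --- for $p=5$, $r=2$ the factor $\binom{25}{5,10,10}$ already gives only $v_p+v_p(\text{ratio}-1)=1+3=4<6$ --- so one must accumulate the carries produced by all four triples before spending the smallest Gessel gain.
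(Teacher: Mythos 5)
Your reduction is sound as far as it goes --- passing to $B-B'=B'\bigl(B/B'-1\bigr)$, noting the signs agree since $p$ is odd, that $v_p(B)=v_p(B')$, and that Lemma \ref{jg} applied to a binomial factorization of the ratio bounds $v_p(B/B'-1)$ --- and these are exactly the paper's ingredients. But the proposal stops precisely where the work lies: the inequality $V+\min_j e_j\ge 3r$ is announced as ``the heart of the matter'' and then replaced by an unexecuted plan (Kummer digit dichotomies, ``global carry-versus-gain bookkeeping''), so what you have is a strategy, not a proof. Moreover, the diagnosis guiding that plan is off: no accumulation of carries over all four triples is needed, and the claim that ``the inequality cannot be proved one multinomial at a time'' is contradicted by the paper, which closes it one triple at a time. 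The missing idea is the row--column double factorization: since all row and column sums of the $3\times 4$ array equal $n=mp^r$, one has, up to sign, $B=\binom{n}{a_1,b_2,d_2}\binom{n}{a_2,b_3,c_2}\binom{n}{a_3,c_3,d_3}\binom{n}{b_1,c_1,d_1}$ as well as the defining row product, so the three entries of any \emph{single} row lie in three \emph{distinct} column multinomials, whence $\binom{n}{a_1}\binom{n}{a_2}\binom{n}{a_3}$ divides $B$. Writing $s_1\ge s_2$ for the capped valuations of the row-$\ib{a}$ entries peeled in that order (largest first; in any triple summing to $mp^r$ the two smallest capped valuations coincide), Lemma \ref{jg} gives $\binom{mp^r}{\ib{a}}/\binom{mp^{r-1}}{\ib{a}/p}\equiv 1\pmod{p^{s_1+2s_2}}$ while (\ref{easier}) applied to those three divisors gives $V\ge(r-s_1)+2(r-s_2)=3r-s_1-2s_2$: the Gessel gain spent and the valuation recouped are exact complements \emph{for the same row}, and one takes the row minimizing $s_1+2s_2$.

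Without this matching your setup does not close. Pairing the minimum of the eight column-binomial exponents $e_j$ (with an unspecified peeling order) against a crude bound for $V$ fails: if a column has capped valuations $(1,1,r)$ and a valuation-$1$ entry is peeled first, its second binomial yields $e_j=3$, while the divisibility recovered from that column's entries via the complementary factorization is only $(r-1)+(r-1)+0=2r-2$, which is less than $3r-3$ for all $r\ge 2$. The cure is precisely the ordering convention above, which makes $s_1+2s_2$ the exact complement of the guaranteed valuation. Note also that your example $\binom{25}{5,10,10}$ illustrates the mechanism rather than obstructing it: the paper never charges the Gessel-ed multinomial for its own valuation $1$; instead the compensating $p^{3}=p^{3r-s_1-2s_2}$ comes from $\binom{25}{5}\binom{25}{10}\binom{25}{10}$ dividing the three \emph{other} (column) multinomials of $B$, so $3+3=6=3r$ with nothing left over.
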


\begin{proof}
Write $s_1 = \text{min} \, (\nu_{p}(a_1), r)$, $s_2 = \text{min} \, (\nu_{p}(a_2), r)$ and $s_3 = \text{min} \, (\nu_{p}(a_3), r)$ and suppose $s_1 \geq s_2$ and $s_3$. Thus, $a_3 = mp^r - a_1 - a_2$ is divisible by exactly $p^{s_2}$, and hence $s_2 = s_3$. It follows from two applications of Lemma \ref{jg} applied to
\begin{equation*}
\binom{mp^r}{\ib{a}} = \binom{mp^r}{a_1} \binom{mp^r - a_1}{a_2}
\end{equation*}
(note that $p^{s_1}$ divides $mp^r - a_1$) that since $p \mid \ib{a}$
\begin{equation} \label{ratio0}
\binom{mp^r}{\ib{a}} / \binom{mp^{r-1}}{\ib{a}/p} \equiv 1 \pmod{p^{s_1 + 2s_2}}.
\end{equation}
The same argument applies with $\ib{a}$ replaced by $\ib{b}$, $\ib{c}$ or $\ib{d}$. Suppose that the value of the quantity $s_1 + 2s_2$ is smallest for $\ib{a}$ in comparison with those for $\ib{b}$, $\ib{c}$ and $\ib{d}$. Then, using (\ref{ratio0}) leads to
\begin{equation} \label{ratio}
\frac{B(\ib{a}, \ib{b}, \ib{c}, \ib{d})}{B((\ib{a}, \ib{b}, \ib{c}, \ib{d})/p)} \equiv 1 \pmod{p^{s_1 + 2s_2}}.
\end{equation}
By the constraints defining $U(n)$ in (\ref{Uset}), we may write
\begin{equation} \label{form}
 B(\ib{a}, \ib{b}, \ib{c}, \ib{d}) = (-1)^{a_2 + b_1 + d_3} \binom{mp^r}{a_1, b_2, d_2} \binom{mp^r}{a_2, b_3, c_2} \binom{mp^r}{a_3, c_3, d_3} \binom{mp^r}{b_1, c_1, d_1}.
\end{equation}
In particular, 
\begin{equation} \label{three}
\binom{mp^r}{a_1} \binom{mp^r}{a_2} \binom{mp^r}{a_3} \mid B(\ib{a}, \ib{b}, \ib{c}, \ib{d}).
\end{equation}
For $i=1$, $2$ or $3$, write $a_i = k_i p^{s_i}$. Recall $s_i = \text{min} \, (\nu_{p}(a_i), r)$. There are now two cases. If $r \leq \nu_{p}(a_i)$, then $s_i=r$. Otherwise, $s_i = \nu_{p}(a_i) \leq r$ and $p \nmid k_i$. In either case, we may apply (\ref{easier}) and (\ref{three}) to obtain
\begin{equation} \label{last0}
B(\ib{a}, \ib{b}, \ib{c}, \ib{d}) \equiv 0 \pmod{p^{3r-s_1-2s_2}}.
\end{equation}
Combining (\ref{ratio}) and (\ref{last0}) yields (\ref{dbyp}).
\end{proof}

For a fixed prime $p \geq 5$ and integers $m$, $r \geq 1$, consider the sets
\begin{equation} \label{Uab}
U_{\ib{a} \ib{b}}(mp^r) := \{ (\ib{a}, \ib{b}, \ib{c}, \ib{d})  \in U(mp^r) \, : \, p \nmid \ib{a} \text{ and } \ib{b}, \, p \mid \ib{c} \text{ and } \ib{d} \}
\end{equation}
and 
\begin{equation*} \label{Us}
U_{\ib{a} \ib{b}}^{(s)}(mp^r) := \{ (\ib{a}, \ib{b}, \ib{c}, \ib{d}) \in U_{\ib{a} \ib{b}}(mp^r) \, : \,s =\,\text{min} \, (\nu_{p}(\ib{c}), \nu_{p}(\ib{d}), r) \}.
\end{equation*}
Clearly, $1 \leq s \leq r$. Furthermore, let $\bm{x} := (x, -x, 0, 0, -x, x, 0, 0, 0, 0, 0, 0) \in \mathbb{Z}^{12}$, 
\begin{equation*}
S_{p^s} := \{ 0 \leq x \leq p^s \, : \, p \nmid x \}
\end{equation*}
and 
\begin{equation*}
L_{s}(mp^r) := \left\{ \bm{\ell} \in p^s \, \mathbb{Z}_{\geq 0}^{12} \, : \, \bm{\ell} + \bm{x} \in U_{\ib{a} \ib{b}}^{(s)}(mp^r) \, \text{for some $x \in S_{p^s}$} \right \}.
\end{equation*}
Note that the sets $L_{s}(mp^r)$ are disjoint as $\bm{\ell} \in L_{s}(mp^r)$ implies $\nu_{p}(\bm{\ell})=s$. Finally, for each $\bm{\ell} \in L_{s}(mp^r)$, consider
\begin{equation*}
T_{s, \ib{\ell}} := \left \{ \bm{\alpha} \in \mathbb{Z}^{12} \, : \, \bm{\alpha} = \bm{\ell} + \bm{x} \,\, \text{for} \,\, x \in S_{p^s} \right \}.
\end{equation*}

\begin{proposition} \label{UintoT} We have
\begin{equation*}
U_{\ib{a} \ib{b}}(mp^r) = \bigsqcup_{1 \leq s \leq r} \, \bigsqcup_{\ib{\ell} \, \in \, L_s(mp^r)} T_{s,\bm{\ell}}.
\end{equation*}
\end{proposition}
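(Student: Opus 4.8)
The plan is to realize the claimed decomposition as an explicit partition coming from a single change of variables. Since every $(\ib{a},\ib{b},\ib{c},\ib{d}) \in U_{\ib{a}\ib{b}}(mp^r)$ carries a well-defined value $s = \min(\nu_{p}(\ib{c}),\nu_{p}(\ib{d}),r)$ with $1 \le s \le r$, I would first split $U_{\ib{a}\ib{b}}(mp^r) = \bigsqcup_{1\le s\le r} U^{(s)}_{\ib{a}\ib{b}}(mp^r)$ and reduce to showing $U^{(s)}_{\ib{a}\ib{b}}(mp^r) = \bigsqcup_{\bm{\ell}\in L_s(mp^r)} T_{s,\bm{\ell}}$ for each fixed $s$. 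The observation to record first is that adding $\bm{x}$ to any point preserves every defining relation of $U(n)$: in each of the eight equations the nonzero entries of $\bm{x}$ (namely $+x$ at the $a_1,b_3$ slots and $-x$ at the $a_2,b_2$ slots) are either absent or cancel in pairs. Moreover $\bm{x}$ fixes all of $\ib{c},\ib{d}$, so it preserves $s$, and the shift therefore acts within a single $U^{(s)}_{\ib{a}\ib{b}}(mp^r)$.

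For the inclusion $U^{(s)}_{\ib{a}\ib{b}}(mp^r) \subseteq \bigsqcup_{\bm{\ell}} T_{s,\bm{\ell}}$, given $\bm{\alpha}=(\ib{a},\ib{b},\ib{c},\ib{d})$ I would let $x$ be the least nonnegative residue of $a_1$ modulo $p^s$ and set $\bm{\ell} := \bm{\alpha} - \bm{x}$, the point being that all twelve coordinates of $\bm{\ell}$ then lie in $p^s\Z_{\ge 0}$. Divisibility of the passive coordinates is immediate for $\ib{c},\ib{d}$ by the definition of $s$, and follows for $a_3,b_1$ from $a_3+c_3+d_3=mp^r$ and $b_1+c_1+d_1=mp^r$ together with $s\le r$; for the four active coordinates one reads the congruences $a_1+a_2\equiv a_1+b_2\equiv a_2+b_3\equiv 0 \pmod{p^s}$ off the relations $a_1+a_2+a_3=mp^r$, $a_1+b_2+d_2=mp^r$, $a_2+b_3+c_2=mp^r$ (again using $p^s\mid\ib{c},\ib{d},mp^r$), whence $\ell_1=a_1-x$, $\ell_2=a_2+x$, $\ell_5=b_2+x$, $\ell_6=b_3-x$ are all divisible by $p^s$. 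Nonnegativity is clear for $\ell_2,\ell_5$, while $\ell_1=a_1-x\ge 0$ because $x$ is the residue of $a_1$, and $\ell_6=b_3-x\ge 0$ because $b_3\equiv a_1\equiv x$ makes $x$ equally the residue of $b_3$. Finally $p\nmid x$: since $s\ge 1$ we have $p\mid a_3$, so $p\nmid\ib{a}$ forces a unit among $a_1,a_2$, and $a_1+a_2\equiv 0\pmod{p}$ then yields $p\nmid a_1\equiv x$. Hence $x\in S_{p^s}$, $\bm{\ell}\in L_s(mp^r)$, and $\bm{\alpha}\in T_{s,\bm{\ell}}$.

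For the reverse inclusion I would fix $\bm{\ell}\in L_s(mp^r)$ and any $x'\in S_{p^s}$ and check $\bm{\ell}+\bm{x}'\in U^{(s)}_{\ib{a}\ib{b}}(mp^r)$. The relations hold by the shift computation above, and nonnegativity of the two subtracted coordinates $a_2=\ell_2-x'$ and $b_2=\ell_5-x'$ follows because the witness $x_0\in S_{p^s}$ for $\bm{\ell}\in L_s(mp^r)$ forces $\ell_2,\ell_5\ge x_0\ge 1$, hence $\ell_2,\ell_5\ge p^s>x'$. The unit conditions come from $a_1\equiv b_3\equiv x'\not\equiv 0\pmod{p}$, giving $p\nmid\ib{a}$ and $p\nmid\ib{b}$, while $\ib{c},\ib{d}$ are untouched so $s$ is unchanged.

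It remains to prove disjointness. Distinct values of $s$ give disjoint pieces because each $T_{s,\bm{\ell}}\subseteq U^{(s)}_{\ib{a}\ib{b}}(mp^r)$ and these are disjoint by the definition of $s$. For fixed $s$ and $\bm{\ell}\ne\bm{\ell}'$ in $L_s(mp^r)$, any common point $\bm{\ell}+\bm{x}=\bm{\ell}'+\bm{x}'$ gives $\ell_1-\ell_1'=x'-x$; as $\ell_1,\ell_1'\in p^s\Z$ this forces $x\equiv x'\pmod{p^s}$, whence $x=x'$ since both lie in $\{1,\dots,p^s-1\}$, and then $\bm{\ell}=\bm{\ell}'$. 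The main obstacle is the coordinate bookkeeping in the second paragraph: verifying simultaneously that all twelve entries of $\bm{\ell}$ are divisible by $p^s$ and nonnegative, where the delicate points are the divisibility of the passive entries $a_3,b_1$ (which needs $s\le r$) and the nonnegativity of $\ell_1$ and $\ell_6$ (which needs that $x$ is the common residue of $a_1$ and $b_3$ modulo $p^s$).
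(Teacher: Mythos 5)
Your proof is correct and follows essentially the same route as the paper's: you stratify $U_{\ib{a} \ib{b}}(mp^r)$ by $s = \min(\nu_p(\ib{c}), \nu_p(\ib{d}), r)$, extract the divisibility of $a_3$, $b_1$ and the congruences linking $a_1$, $a_2$, $b_2$, $b_3$ from the same equations of (\ref{Uset}) reduced modulo $p^s$, and use the invariance of the eight relations under the shift by $\bm{x}$ for the reverse inclusion. Your write-up is in fact slightly more complete than the paper's, as you spell out the fixed-$s$ disjointness and the verification that $p \nmid x$, both of which the paper asserts without detail.
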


\begin{proof} 
We first observe that the sets $T_{s, \ib{\ell}}$ are disjoint for each fixed $s$. Let  $(\ib{a}, \ib{b}, \ib{c}, \ib{d}) \in U_{\ib{a} \ib{b}}(mp^r)$. Then  $(\ib{a}, \ib{b}, \ib{c}, \ib{d}) \in U_{\ib{a} \ib{b}}^{(s)}(mp^r)$ for $s =\,\text{min} \, (\nu_{p}(\ib{c}), \nu_{p}(\ib{d}), r)$. Reducing the fifth and eighth equation in (\ref{Uset}), we find $p^s \mid b_1$ and $p^s \mid a_3$, respectively. Hence, we may write
\begin{equation*}
(\ib{a}, \ib{b}, \ib{c}, \ib{d}) = p^s \bm{k} + (x, y, 0, 0, z, w, 0, 0, 0, 0, 0, 0, 0)
\end{equation*}
for some $\bm{k} \in \mathbb{Z}_{\geq 0}^{12}$ and $x$, $y$, $z$, $w \in S_{p^s}$. Now, reducing the first, sixth and seventh equations modulo $p^s$ in (\ref{Uset}), we obtain $-y \equiv -z \equiv w \equiv x \pmod{p^s}$ and so
\begin{equation*}
(\ib{a}, \ib{b}, \ib{c}, \ib{d}) = p^s \bm{k'} + \bm{x} \in U_{\ib{a} \ib{b}}^{(s)}(mp^r)
\end{equation*}
where only the components $k_2$, $k_5$ and $k_6$ of $\bm{k}$ have been redefined to form $\bm{k'}$. Thus, $\bm{\ell} := p^s \bm{k'} \in L_s(mp^r)$ and so $(\ib{a}, \ib{b}, \ib{c}, \ib{d}) \in T_{s, \bm{\ell}}$. Conversely, let $\bm{\alpha} \in T_{s, \bm{\ell}}$ for some $1 \leq s \leq r$ and $\bm{\ell} \in L_{s}(mp^r)$. Then $\bm{\alpha} = \bm{\ell} + \bm{x} = p^s \bm{k} + \bm{x} \in U_{\ib{a} \ib{b}}^{(s)}(mp^r)$ for some $\bm{k} \in \mathbb{Z}_{\geq 0}^{12}$ and $x \in S_{p^s}$. Note that, in fact, if we replace the component $x$ in $\bm{x}$ with any $y \in S_{p^s}$ and write $\bm{\alpha'}$ for the associated vector, then the eight equations in (\ref{Uset}) are still satisfied for $\bm{\alpha'}$. Also, since $x$, $y \in S_{p^s}$, $p^s k_2 - x \geq 0$ and $p^s k_5 - x \geq 0$, we have $p^s k_2 - y \geq 0$ and $p^s k_5 - y \geq 0$. Hence, $\bm{\alpha'} \in U_{\ib{a} \ib{b}}^{(s)}(mp^r)$ and so $T_{s, \bm{\ell}} \subseteq U_{\ib{a} \ib{b}}^{(s)}(mp^r)$.
\end{proof}

Let $\lceil \,\,\, \rceil$ denote the usual ceiling function. The next two results follow from splitting the defined product of the binomial coefficient, according to whether the index is divisible by $p$ or not. The first result is \cite[Lemma 2.4]{oss}. We omit the proof of the second result.

\begin{lemma} \label{l1} For primes $p$, integers $m$ and integers $k \geq 0$, $s \geq 1$,
\begin{equation*}
\binom{mp^s - 1}{k} (-1)^k \equiv \binom{mp^{s-1} - 1}{\lfloor k/p \rfloor} (-1)^{\lfloor k/p \rfloor} \pmod{p^s}.
\end{equation*}
\end{lemma}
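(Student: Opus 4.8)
The plan is to follow the hint and realize the binomial coefficient as an explicit product, then separate the factors whose index is a multiple of $p$ from the rest. Starting from
\[
(-1)^k \binom{mp^s - 1}{k} = (-1)^k \prod_{j=1}^{k} \frac{mp^s - j}{j} = \prod_{j=1}^{k} \frac{j - mp^s}{j},
\]
I would split the product over $1 \le j \le k$ into the subproduct over indices with $p \nmid j$ and the subproduct over indices with $p \mid j$. Since the individual factors are not integers, I would work throughout in the localization $\mathbb{Z}_{(p)}$ and reduce modulo $p^s \mathbb{Z}_{(p)}$; because both sides of the asserted congruence lie in $\mathbb{Z}$ and $p^s \mathbb{Z}_{(p)} \cap \mathbb{Z} = p^s \mathbb{Z}$, any congruence established in $\mathbb{Z}_{(p)}$ descends to the desired one in $\mathbb{Z}$.

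For the factors with $p \nmid j$, each such $j$ is a unit in $\mathbb{Z}_{(p)}$, so $mp^s / j \in p^s \mathbb{Z}_{(p)}$ and hence
\[
\frac{j - mp^s}{j} = 1 - \frac{mp^s}{j} \equiv 1 \pmod{p^s}.
\]
Multiplying these congruences together shows that the entire subproduct over $p \nmid j$ is $\equiv 1 \pmod{p^s}$. For the factors with $p \mid j$, I would substitute $j = ip$ with $i$ running over $1 \le i \le \lfloor k/p \rfloor$, so that after cancelling $p$ in numerator and denominator,
\[
\prod_{\substack{1 \le j \le k \\ p \mid j}} \frac{j - mp^s}{j} = \prod_{i=1}^{\lfloor k/p \rfloor} \frac{i - mp^{s-1}}{i} = (-1)^{\lfloor k/p \rfloor} \binom{mp^{s-1} - 1}{\lfloor k/p \rfloor}.
\]
Combining this with the previous paragraph immediately yields the claimed congruence.

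I do not expect a genuine obstacle here. The only points requiring care are the bookkeeping of working in $\mathbb{Z}_{(p)}$ so that the non-integer factors are legitimate, the observation that exactly $\lfloor k/p \rfloor$ of the indices $1 \le j \le k$ are divisible by $p$ (which is precisely what produces the floor on the right-hand side), and the remark that both sides are genuine integers so that the $\mathbb{Z}_{(p)}$-congruence is the same as the integer congruence. The boundary and degenerate ranges (for instance $k = 0$, or $0 \le k < p$ where both the floor and all multiple-of-$p$ factors vanish) are handled uniformly by the empty-product conventions, so no separate case analysis is needed.
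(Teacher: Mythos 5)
Your proof is correct and follows exactly the route the paper indicates: the paper cites this as \cite[Lemma 2.4]{oss} and notes that it ``follows from splitting the defined product of the binomial coefficient, according to whether the index is divisible by $p$ or not,'' which is precisely your decomposition of $\prod_{j=1}^{k}\frac{j-mp^s}{j}$ into the $p \nmid j$ factors (each $\equiv 1 \pmod{p^s}$ in $\mathbb{Z}_{(p)}$) and the $p \mid j$ factors (which collapse to $(-1)^{\lfloor k/p \rfloor}\binom{mp^{s-1}-1}{\lfloor k/p \rfloor}$). Your added care with $\mathbb{Z}_{(p)}$ and the observation $p^s\mathbb{Z}_{(p)} \cap \mathbb{Z} = p^s\mathbb{Z}$ is a correct and complete way to handle the non-integral factors, and the argument rightly needs no restriction to $p \geq 5$.
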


\begin{lemma} \label{l2} For primes $p$, integers $m$ and integers $k$, $\ell \geq 0$, $s \geq 1$,
\begin{equation*}
\binom{mp^s - \ell}{kp^s} \equiv \binom{mp^{s-1} - \lceil \ell/p \rceil}{kp^{s-1}} \pmod{p^s}.
\end{equation*}
\end{lemma}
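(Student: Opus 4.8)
The plan is to expand both binomial coefficients as ratios of products of consecutive integers and to separate, in each product, the factors divisible by $p$ from those coprime to $p$; this mirrors the splitting behind Lemma \ref{l1}. Setting $c := (m-k)p^s - \ell$, I would begin from
\[
\binom{mp^s - \ell}{kp^s} = \frac{\prod_{j=1}^{kp^s} (c+j)}{(kp^s)!},
\]
so that the numerator is the product of the $kp^s$ consecutive integers filling the interval $[c+1,\, mp^s - \ell]$ while the denominator is the product over $[1, kp^s]$. If $0 \le mp^s - \ell < kp^s$, then both sides of the claimed congruence vanish, so I may assume we are outside this degenerate range.

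The first main step is to extract the factors divisible by $p$. In the denominator these are $p, 2p, \dots, (kp^{s-1})p$, contributing $p^{kp^{s-1}} (kp^{s-1})!$. In the numerator, the multiples of $p$ in $[c+1,\, mp^s-\ell]$ are the integers $pt$ with $t$ ranging over an interval of integers; using $\lfloor (mp^s - \ell)/p \rfloor = mp^{s-1} - \lceil \ell/p \rceil$ at the top and the identity $\lfloor (\ell-1)/p \rfloor = \lceil \ell/p \rceil - 1$ at the bottom, one finds that $t$ fills precisely $[(m-k)p^{s-1} - \lceil \ell/p\rceil + 1,\, mp^{s-1} - \lceil \ell/p \rceil]$, which contains exactly $kp^{s-1}$ integers. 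Hence the $p$-divisible part of the numerator equals $p^{kp^{s-1}}$ times the product of these $t$, and dividing by the $p^{kp^{s-1}}(kp^{s-1})!$ coming from the denominator leaves exactly $\binom{mp^{s-1} - \lceil\ell/p\rceil}{kp^{s-1}}$. This is the step that forces the ceiling function, and the careful accounting of the two endpoints is the main (if elementary) obstacle.

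It then remains to show that the ratio of the $p$-coprime factors of the numerator to those of the denominator is $\equiv 1 \pmod{p^s}$. Here I would cut each of $[c+1,\, mp^s - \ell]$ and $[1, kp^s]$, both of length $kp^s$, into $k$ consecutive blocks of $p^s$ integers. Any block of $p^s$ consecutive integers is a complete residue system modulo $p^s$, so the product of its entries coprime to $p$ is congruent modulo $p^s$ to the fixed unit $W := \prod_{1 \le u \le p^s,\, p \nmid u} u$, independently of the block. Thus both $p$-coprime products are $\equiv W^k \pmod{p^s}$, and since $\gcd(W,p)=1$ their ratio is $\equiv 1 \pmod{p^s}$.

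Finally I would combine the two steps. Writing $A$ and $B$ for the binomial coefficients on the two sides and $P$, $Q$ for the $p$-coprime parts of the numerator and denominator respectively, the above gives $A\,Q = B\,P$ with $P \equiv Q \pmod{p^s}$ and $\gcd(Q,p)=1$; reducing modulo $p^s$ and cancelling the unit $Q$ yields $A \equiv B \pmod{p^s}$. I note that this argument nowhere uses that $p$ is odd: the actual value of $W$ is irrelevant, since the equal power $W^k$ appears in both numerator and denominator, so the statement holds for every prime $p$, as claimed.
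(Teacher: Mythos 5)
Your proof is correct and takes precisely the route the paper intends: the paper omits the proof of Lemma~\ref{l2}, noting only that it ``follows from splitting the defined product of the binomial coefficient, according to whether the index is divisible by $p$ or not,'' and your argument carries out exactly this splitting, with the $p$-divisible factors reproducing $\binom{mp^{s-1}-\lceil \ell/p\rceil}{kp^{s-1}}$ via $\lfloor (mp^s-\ell)/p \rfloor = mp^{s-1}-\lceil \ell/p\rceil$ and the $p$-coprime factors cancelling modulo $p^s$ through the complete-residue-system observation. Your endpoint bookkeeping forcing the ceiling function, the disposal of the degenerate range $0 \le mp^s-\ell < kp^s$, and the unit cancellation $AQ = BP$ with $\gcd(Q,p)=1$ are all sound, so this is a complete and valid write-up of the omitted proof.
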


For tuples $\ib{\ell}:=({\ell}_1, \dotsc, {\ell}_{12}) \in \mathbb{Z}^{12}$, integers $n$ and $x \in \mathbb{R}$, define
\begin{equation} \label{choose}
C(\ib{\ell}, n, x) = \binom{n-1}{{\ell}_1 + \lfloor x \rfloor} \binom{n - {\ell}_1 - \lceil x \rceil}{{\ell}_3} \binom{n-1}{{\ell}_6 - \lfloor x \rfloor} \binom{n - {\ell}_6 - \lceil x \rceil}{{\ell}_4}.
\end{equation}
The reason for the choice of (\ref{choose}) will be made clear in the proof of Theorem \ref{main}, in particular see (\ref{reduce}).

\begin{lemma} \label{l3} Let $p$ be an odd prime such that $p^s$ divides both $\ib{\ell}$ and $n$ for some $s \geq 1$ and $x \in \mathbb{R}$ with $0 \leq x \leq p^s$. Then
\begin{equation*}
C(\ib{\ell}, n, x) \equiv C(\ib{\ell}/p, n/p, x/p) \pmod{p^s}.
\end{equation*}
\end{lemma}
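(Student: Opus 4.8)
The plan is to reduce $C(\ib{\ell}, n, x)$ one binomial factor at a time, matching each of the four factors in (\ref{choose}) to the corresponding factor of $C(\ib{\ell}/p, n/p, x/p)$ modulo $p^s$. Since $p^s \mid n$ and $p^s \mid \ib{\ell}$, I would write $n = mp^s$ and $\ell_i = p^s \ell_i'$. With this notation the first and third factors of $C$ have the form $\binom{mp^s - 1}{k}$ and are therefore governed by Lemma \ref{l1}, whereas the second and fourth factors have the form $\binom{mp^s - \ell}{k p^s}$ (with $\ell = \lceil x \rceil$ and lower entries $\ell_3 = \ell_3' p^s$, $\ell_4 = \ell_4' p^s$) and are governed by Lemma \ref{l2}.

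First I would dispatch the second and fourth factors, which is the easy part. Applying Lemma \ref{l2} to $\binom{n - \ell_1 - \lceil x \rceil}{\ell_3}$ with top parameter $m - \ell_1'$ and $\ell = \lceil x \rceil$ yields $\binom{n/p - \ell_1/p - \lceil \lceil x \rceil / p \rceil}{\ell_3/p} \pmod{p^s}$; the nested-ceiling identity $\lceil \lceil x \rceil / p \rceil = \lceil x/p \rceil$ then turns this into exactly the second factor of the reduced $C$. The fourth factor is treated identically with $\ell_1, \ell_3$ replaced by $\ell_6, \ell_4$. Both steps are clean because Lemma \ref{l2} carries no sign.

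Next I would turn to the first factor. Applying Lemma \ref{l1} to $\binom{n-1}{\ell_1 + \lfloor x \rfloor}$ gives the reduced lower index $\lfloor (\ell_1 + \lfloor x \rfloor)/p \rfloor = \ell_1/p + \lfloor x/p \rfloor$, using $p \mid \ell_1$ together with $\lfloor \lfloor x \rfloor / p \rfloor = \lfloor x/p \rfloor$; this is precisely the lower index of the first factor of the reduced $C$, at the cost of a sign $(-1)^{(\ell_1 + \lfloor x \rfloor) - (\ell_1/p + \lfloor x/p \rfloor)}$. To control this sign I would record that $p$ is odd, so that $\ell_1 - \ell_1/p = p^{s-1}\ell_1'(p-1)$ is even; the sign is then governed only by the parity of $\lfloor x \rfloor - \lfloor x/p \rfloor$, which I would arrange to cancel against the corresponding contribution coming from the third factor.

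The third factor is where I expect the real difficulty. Here the lower index carries $-\lfloor x \rfloor$, so reducing $\binom{n-1}{\ell_6 - \lfloor x \rfloor}$ through Lemma \ref{l1} produces $\lfloor (\ell_6 - \lfloor x \rfloor)/p \rfloor = \ell_6/p - \lceil \lfloor x \rfloor / p \rceil$, a \emph{ceiling} of $\lfloor x \rfloor / p$ rather than the floor $\lfloor x/p \rfloor$ demanded by the reduced $C$. Reconciling this discrepancy, while simultaneously checking that the sign it produces combines with the sign from the first factor to give $+1$ modulo $p^s$, is the crux of the argument; this is exactly the step where the hypothesis $0 \le x \le p^s$ and a careful parity analysis, keyed to whether $p \mid \lfloor x \rfloor$, must be brought to bear. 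Once the first and third factors are matched in this way and multiplied with the already-reduced second and fourth factors, the four pieces assemble into the claimed congruence.
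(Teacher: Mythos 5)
Your handling of the second and fourth factors (Lemma \ref{l2} together with the nested identity $\lceil \lceil x \rceil /p \rceil = \lceil x/p \rceil$) and of the first factor (Lemma \ref{l1}, with the sign contribution $\ell_1 - \ell_1/p$ killed because $p$ is odd) is exactly the paper's argument. But the step you defer as ``the crux'' --- reconciling $\lceil \lfloor x \rfloor /p \rceil$ with $\lfloor x/p \rfloor$ in the third factor and cancelling the residual signs --- is a genuine gap: you assert it can be done by a parity analysis keyed to $p \mid \lfloor x \rfloor$, but you never carry it out, and in fact no such analysis can close it, because with the minus sign in (\ref{choose}) read literally the lemma is \emph{false}. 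Take $p=5$, $s=1$, $n=10$, $x=2$ and $\ell_1=\ell_3=\ell_4=0$, $\ell_6=5$. Then
\begin{equation*}
C(\ib{\ell},10,2)=\binom{9}{2}\binom{8}{0}\binom{9}{5-2}\binom{3}{0}=36\cdot 84=3024\equiv 4 \pmod{5},
\end{equation*}
while $C(\ib{\ell}/5,\,2,\,2/5)=\binom{1}{0}\binom{1}{0}\binom{1}{1-0}\binom{0}{0}=1$, so the claimed congruence fails. (That $\ell_6-\lfloor x\rfloor$ can even go negative, outside the hypothesis $k\ge 0$ of Lemma \ref{l1}, is another symptom of the same problem.)

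What your computation has actually uncovered is a sign typo in (\ref{choose}): the third factor should read $\binom{n-1}{\ell_6+\lfloor x\rfloor}$. This is the form displayed in the paper's own proof of Lemma \ref{l3}, and it is the form the application forces: in (\ref{reduce2}) the third binomial is $\binom{mp^r-1}{p^s k_6+x-1}$, and since $p\nmid x$ gives $\lfloor (p^s k_6+x-1)/p\rfloor=p^{s-1}k_6+\lfloor x/p\rfloor$, Lemma \ref{l1} reduces it to a binomial with lower index $p^{s-1}k_6+\lfloor x/p\rfloor$, which matches $C(\ib{\ell}/p,mp^{r-1},x/p)$ only in the \emph{plus} version. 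With the sign corrected, the third factor reduces exactly as you reduced the first: the lower index becomes $\ell_6/p+\lfloor x/p\rfloor$ and the leftover sign is again $(-1)^{\lfloor x\rfloor-\lfloor x/p\rfloor}$, so the two Lemma \ref{l1} signs cancel against each other outright, with no case analysis and no use of $0\le x\le p^s$ beyond what you already made. That cancellation, combined with your Lemma \ref{l2} steps, is the paper's entire (brief) proof; your plan is the right one and goes through verbatim once the statement is repaired.
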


\begin{proof}
Let $\ib{\ell} = (p^s k_1, \dotsc, p^s k_{12})$ and $n=mp^s$ where $k_i$, $m \in \mathbb{Z}$ for $1 \leq i \leq 12$. After applying Lemmas \ref{l1} and \ref{l2}, using the facts $\lfloor x/p \rfloor = \left \lfloor \lfloor x \rfloor /p \right \rfloor$ and $\lceil x/p \rceil = \left \lceil \lceil x \rceil / p \right \rceil$ and simplifying, we obtain
\begin{equation*}
\begin{aligned}
C(\ib{\ell}, mp^s, x) & = \binom{mp^s - 1}{p^s k_1 + \lfloor x \rfloor} \binom{mp^s - p^s k_1 - \lceil x \rceil}{p^s k_3} \binom{mp^s - 1}{p^s k_6 + \lfloor x \rfloor} \binom{mp^s - p^s k_6 - \lceil x \rceil}{p^s k_4} \\
& \equiv \binom{mp^{s-1} - 1}{p^{s-1} k_1 + \lfloor x/p \rfloor} \binom{mp^{s-1} - p^{s-1} k_1 - \lceil x/p \rceil}{p^{s-1} k_3}  \binom{mp^{s-1} - 1}{p^{s-1} k_6 + \lfloor x/p \rfloor} \\
&\quad \quad \times \binom{mp^{s-1} - p^{s-1} k_6 - \lceil x/p \rceil}{p^{s-1} k_4} \pmod{p^s} \\
& \equiv C(\ib{\ell}/p, mp^{s-1}, x/p) \pmod{p^s}.
\end{aligned}
\end{equation*}

\end{proof}

\section{Proof of Theorem \ref{main}}

We are now in a position to prove our main result.

\begin{proof}[Proof of Theorem \ref{main}]
We begin by splitting the sum in (\ref{delta}) as 
\begin{equation} \label{split}
\begin{aligned}
A_{\delta}(mp^r) & = \sum_{(\ib{a}, \ib{b}, \ib{c}, \ib{d}) \, \in \, U(mp^r)} B(\ib{a}, \ib{b}, \ib{c}, \ib{d}) \\
& = \sum_{\substack{(\ib{a}, \ib{b}, \ib{c}, \ib{d}) \, \in \, U(mp^r) \\ p \, \mid \, (\ib{a}, \ib{b}, \ib{c}, \ib{d})}} B(\ib{a}, \ib{b}, \ib{c}, \ib{d}) \,\, +  \sum_{\substack{(\ib{a}, \ib{b}, \ib{c}, \ib{d}) \, \in \, U(mp^r) \\ p \, \nmid \, (\ib{a}, \ib{b}, \ib{c}, \ib{d})}} B(\ib{a}, \ib{b}, \ib{c}, \ib{d}).
\end{aligned}
\end{equation}
By Proposition \ref{first} and the fact that $(\ib{a}, \ib{b}, \ib{c}, \ib{d}) \in U(mp^{r-1})$ if and only if $p (\ib{a}, \ib{b}, \ib{c}, \ib{d}) \in U(mp^{r})$,
\begin{equation} \label{det}
\begin{aligned}
\sum_{\substack{(\ib{a}, \ib{b}, \ib{c}, \ib{d}) \, \in \, U(mp^r) \\ p \, \mid \, (\ib{a}, \ib{b}, \ib{c}, \ib{d})}} B(\ib{a}, \ib{b}, \ib{c}, \ib{d}) \,\, & \equiv \sum_{\substack{(\ib{a}, \ib{b}, \ib{c}, \ib{d}) \, \in \, U(mp^r) \\ p \, \mid \, (\ib{a}, \ib{b}, \ib{c}, \ib{d})}}  B((\ib{a}, \ib{b}, \ib{c}, \ib{d})/p)  \pmod{p^{3r}} \\
& \equiv \sum_{(\ib{a}, \ib{b}, \ib{c}, \ib{d}) \, \in \, U(mp^{r-1})} B(\ib{a}, \ib{b}, \ib{c}, \ib{d}) \pmod{p^{3r}} \\
& \equiv A_{\delta}(mp^{r-1}) \pmod{p^{3r}}. 
\end{aligned}
\end{equation} 
By (\ref{split}) and (\ref{det}), it suffices to prove
\begin{equation} \label{suff}
\sum_{\substack{(\ib{a}, \ib{b}, \ib{c}, \ib{d}) \, \in \, U(mp^r) \\ p \, \nmid \, (\ib{a}, \ib{b}, \ib{c}, \ib{d})}} B(\ib{a}, \ib{b}, \ib{c}, \ib{d}) \equiv 0 \pmod{p^{3r}}.
\end{equation}
Note that when $p$ does not divide one of $\ib{a}$, $\ib{b}$, $\ib{c}$ or $\ib{d}$, then there must be at least one associated component which is not divisible by $p$. Choose one of the last four equations in (\ref{Uset}) such that this component appears and reduce it modulo $p$. One then finds that $p$ also does not divide at least one of the other two components in this equation. Thus, $p$ does not divide at least two of $\ib{a}$, $\ib{b}$, $\ib{c}$, $\ib{d}$ and so the left-hand side of (\ref{suff}) becomes
\begin{equation} \label{threesums}
\begin{aligned}
\sum_{\substack{(\ib{a}, \ib{b}, \ib{c}, \ib{d}) \, \in \, U(mp^r) \\ p \, \nmid \, (\ib{a}, \ib{b}, \ib{c}, \ib{d}) }} B(\ib{a}, \ib{b}, \ib{c}, \ib{d})  &= \sum_{\substack{(\ib{a}, \ib{b}, \ib{c}, \ib{d}) \, \in \, U(mp^r) \\ p \, \nmid \, \ib{a}, \ib{b}, \ib{c} \,\, \text{and} \,\, \ib{d}}}  B(\ib{a}, \ib{b}, \ib{c}, \ib{d}) \,\,  + \sum_{\substack{(\ib{a}, \ib{b}, \ib{c}, \ib{d}) \, \in \, U(mp^r) \\ p \, \nmid \, \text{exactly 3 of } \ib{a}, \ib{b}, \ib{c}, \ib{d}}}  B(\ib{a}, \ib{b}, \ib{c}, \ib{d})  \\
& + \sum_{\substack{(\ib{a}, \ib{b}, \ib{c}, \ib{d}) \, \in \, U(mp^r) \\ p \, \nmid \, \text{exactly 2 of } \ib{a}, \ib{b}, \ib{c},\ib{d}}}  B(\ib{a}, \ib{b}, \ib{c}, \ib{d}).
\end{aligned}
\end{equation}
We first claim that
\begin{equation} \label{claim}
\sum_{\substack{(\ib{a}, \ib{b}, \ib{c}, \ib{d}) \, \in \, U(mp^r) \\ p \, \nmid \, \ib{a}, \ib{b}, \ib{c} \,\, \text{and} \,\, \ib{d}}}  B(\ib{a}, \ib{b}, \ib{c}, \ib{d})   \,\, \equiv \sum_{\substack{(\ib{a}, \ib{b}, \ib{c}, \ib{d}) \, \in \, U(mp^r) \\ p \, \nmid \, \text{exactly 3 of } \ib{a}, \ib{b}, \ib{c}, \ib{d}}} B(\ib{a}, \ib{b}, \ib{c}, \ib{d}) \equiv 0 \pmod{p^{3r}}.
\end{equation}
To see (\ref{claim}), we observe that if $p$ does not divide $\ib{m} \in \{\ib{a}, \ib{b}, \ib{c}, \ib{d} \}$, then $p$ does not divide an associated component, say, $m_i$. So, 
\begin{equation} \label{twoterms}
\binom{mp^r}{\ib{m}} = \binom{mp^r}{m_i} \binom{mp^r - m_i}{m_j}
\end{equation}
where $j \neq i$. Then by (\ref{easier}), the right-hand side of (\ref{twoterms}) is divisible by $p^r$. Hence,  if $p$ does not divide $\ib{a}$, $\ib{b}$, $\ib{c}$ and $\ib{d}$ or $p$ does not divide three of $\ib{a}$, $\ib{b}$, $\ib{c}$, $\ib{d}$, then $p^{3r}$ divides $B(\ib{a}, \ib{b}, \ib{c}, \ib{d})$ and thus (\ref{claim}) follows. 

Now, consider the set $U_{\ib{a} \ib{b}}(mp^r)$ given by (\ref{Uab}). The sets $U_{\ib{a} \ib{c}}(mp^r)$, $U_{\ib{a} \ib{d}}(mp^r)$, $U_{\ib{b} \ib{c}}(mp^r)$, $U_{\ib{b} \ib{d}}(mp^r)$ and $U_{\ib{c} \ib{d}}(mp^r)$ are similarly defined. We clearly have
\begin{equation} \label{one2six}
	\begin{aligned}
		\sum_{\substack{(\ib{a}, \ib{b}, \ib{c}, \ib{d}) \, \in \, U(mp^r) \\ p \, \nmid \, \text{exactly 2 of } \ib{a}, \ib{b}, \ib{c},\ib{d}}}  B(\ib{a}, \ib{b}, \ib{c}, \ib{d}) & = \sum_{(\ib{a}, \ib{b}, \ib{c}, \ib{d}) \, \in \, U_{\ib{a} \ib{b}}(mp^r)} B(\ib{a}, \ib{b}, \ib{c}, \ib{d}) \,\, + \sum_{(\ib{a}, \ib{b}, \ib{c}, \ib{d}) \, \in \, U_{\ib{a} \ib{c}}(mp^r)} B(\ib{a}, \ib{b}, \ib{c}, \ib{d}) \\
		& + \sum_{(\ib{a}, \ib{b}, \ib{c}, \ib{d}) \, \in \, U_{\ib{a} \ib{d}}(mp^r)} B(\ib{a}, \ib{b}, \ib{c}, \ib{d}) \,\, + \sum_{(\ib{a}, \ib{b}, \ib{c}, \ib{d}) \, \in \, U_{\ib{b} \ib{c}}(mp^r)} B(\ib{a}, \ib{b}, \ib{c}, \ib{d}) \\
		& + \sum_{(\ib{a}, \ib{b}, \ib{c}, \ib{d}) \, \in \, U_{\ib{b} \ib{d}}(mp^r)} B(\ib{a}, \ib{b}, \ib{c}, \ib{d}) \\
		& + \sum_{(\ib{a}, \ib{b}, \ib{c}, \ib{d}) \, \in \, U_{\ib{c} \ib{d}}(mp^r)} B(\ib{a}, \ib{b}, \ib{c}, \ib{d}).
	\end{aligned}
\end{equation}
Our second claim is that each of the sums on the right-hand side of (\ref{one2six}) is equal. To deduce this, consider the maps on $U(mp^r)$ given by
\begin{align}
(a_1, a_2, a_3, b_1, b_2, b_3, c_1, c_2, c_3, d_1, d_2, d_3) & \mapsto (d_2, d_3, d_1, a_2, a_1, a_3, c_2, c_3, c_1, b_3, b_2, b_1), \label{b1} \\
(a_1, a_2, a_3, b_1, b_2, b_3, c_1, c_2, c_3, d_1, d_2, d_3) & \mapsto (b_2, b_1, b_3, d_3, d_2, d_1, c_3, c_1, c_2, a_3, a_1, a_2), \label{b2} \\
(a_1, a_2, a_3, b_1, b_2, b_3, c_1, c_2, c_3, d_1, d_2, d_3) & \mapsto (a_3, a_2, a_1, c_1, c_3, c_2, b_1, b_3, b_2, d_1, d_3, d_2). \label{b3}
\end{align}
Applying (\ref{b1}) or (\ref{b2}) does not change the sign or the product of the multinomial coefficients in (\ref{Bdef}) and so if $(\ib{a}, \ib{b}, \ib{c}, \ib{d}) \mapsto (\ib{e}, \ib{f}, \ib{g}, \ib{h})$, then $B(\ib{a}, \ib{b}, \ib{c}, \ib{d}) = B(\ib{e}, \ib{f}, \ib{g}, \ib{h})$. The map (\ref{b1}) yields bijections between $U_{\ib{a} \ib{b}}(mp^r)$ and $U_{\ib{a} \ib{d}}(mp^r)$ and $U_{\ib{a} \ib{c}}(mp^r)$ and $U_{\ib{c} \ib{d}}(mp^r)$. Similarly, the map (\ref{b2}) gives bijections between $U_{\ib{a} \ib{d}}(mp^r)$ and $U_{\ib{b} \ib{d}}(mp^r)$ and $U_{\ib{c} \ib{d}}(mp^r)$ and $U_{\ib{b} \ib{c}}(mp^r)$. Applying (\ref{b3}), one can check that (\ref{Bdef}) is unchanged and we have a bijection between $U_{\ib{a} \ib{b}}(mp^r)$ and $U_{\ib{a} \ib{c}}(mp^r)$. Thus, the second claim follows and so by (\ref{threesums}), (\ref{claim}) and (\ref{one2six}), we have
\begin{equation*} \label{remain}
\sum_{\substack{(\ib{a}, \ib{b}, \ib{c}, \ib{d}) \, \in \, U(mp^r) \\ p \, \nmid \, (\ib{a}, \ib{b}, \ib{c}, \ib{d})}} B(\ib{a}, \ib{b}, \ib{c}, \ib{d}) \equiv 6 \left (\sum_{\substack{(\ib{a}, \ib{b}, \ib{c}, \ib{d}) \, \in \, U_{\ib{a} \ib{b}}(mp^r)}} B(\ib{a}, \ib{b}, \ib{c}, \ib{d}) \right) \pmod{p^{3r}}.
\end{equation*}

By Proposition \ref{UintoT},
\begin{equation} \label{rewrite}
\sum_{\substack{(\ib{a}, \ib{b}, \ib{c}, \ib{d}) \, \in \, U_{\ib{a} \ib{b}}(mp^r)}} B(\ib{a}, \ib{b}, \ib{c}, \ib{d}) = \sum_{1 \leq s \leq r} \, \sum_{\bm{\ell} \, \in \,  L_s(mp^r)} \, \sum_{(\ib{a}, \ib{b}, \ib{c}, \ib{d}) \, \in \, T_{s, \bm{\ell}}}  B(\ib{a}, \ib{b}, \ib{c}, \ib{d}).
\end{equation}
Focusing on the inner sum in (\ref{rewrite}) yields
\begin{equation} \label{reduce}
\begin{aligned}
\sum_{(\ib{a}, \ib{b}, \ib{c}, \ib{d}) \, \in \, T_{s, \ib{\ell}}} B(\ib{a}, \ib{b}, \ib{c}, \ib{d}) & = \sum_{(\ib{a}, \ib{b}, \ib{c}, \ib{d}) \, \in \, T_{s, \ib{\ell}}} (-1)^{a_2 + b_1 + d_3} \binom{mp^r}{\ib{a}} \binom{mp^r}{\ib{b}} \binom{mp^r}{\ib{c}} \binom{mp^r}{\ib{d}} \\
& = \sideset{}{'}\sum_{x=1}^{p^s - 1} (-1)^{p^s k_2 - x + p^s k_4 + p^s k_{12}} \binom{mp^r}{\ib{a}} \binom{mp^r}{\ib{b}} \binom{mp^r}{\ib{c}} \binom{mp^r}{\ib{d}} \\
& = (-1)^{p^s k_{2} + p^s k_4 + p^s k_{12}} \binom{mp^r}{\ib{c}} \binom{mp^r}{\ib{d}} \sideset{}{'} \sum_{x=1}^{p^s - 1} (-1)^x \binom{mp^r}{\ib{a}} \binom{mp^r}{\ib{b}} \\
& = (-1)^{p^s k_{2} + p^s k_4 + p^s k_{12}} m^2 p^{2r} \binom{mp^r}{\ib{c}} \binom{mp^r}{\ib{d}} \\
&\quad \quad \times \sideset{}{'} \sum_{x=1}^{p^s - 1} \frac{(-1)^x}{a_1 b_3} \binom{mp^r-1}{a_1 - 1} \binom{mp^r - a_1}{a_3} \binom{mp^r - 1}{b_3 - 1} \binom{mp^r - b_3}{b_1}
\end{aligned}
\end{equation}
where $(\ib{a}, \ib{b}, \ib{c}, \ib{d}) = \bm{\ell} + x =: p^s \bm{k} + \bm{x}$ and the last step in (\ref{reduce}) follows from two instances of (\ref{easier}). Now, since $s = \text{min} \, (\nu_{p}(\ib{c}), \nu_{p}(\ib{d}), r)$, we find $p^{r-s} \mid \binom{mp^r}{\ib{c}} \binom{mp^r}{\ib{d}}$. Hence by (\ref{reduce}) it suffices to show that
\begin{equation} \label{reduce2}
\begin{aligned}
&\sideset{}{'}\sum_{x=1}^{p^s - 1} \frac{(-1)^x}{a_1 b_3} \binom{mp^r-1}{a_1 - 1} \binom{mp^r - a_1}{a_3} \binom{mp^r - 1}{b_3 - 1} \binom{mp^r - b_3}{b_1} \\
& \equiv \sideset{}{'}\sum_{x=1}^{p^s - 1} \frac{(-1)^x}{(p^s k_1 + x)(p^s k_6 + x)} \binom{mp^r - 1}{p^s k_1 + x - 1} \binom{mp^r - p^s k_1 - x}{p^s k_3} \binom{mp^r - 1}{p^s k_6 + x-1} \\
& \qquad \qquad \qquad \qquad \qquad \qquad \qquad \qquad \qquad \qquad \qquad \qquad \qquad \times \binom{mp^r - p^s k_6 - x}{p^s k_4} \pmod{p^s} \\  
& \equiv \sideset{}{'}\sum_{x=1}^{p^s - 1} \frac{(-1)^x}{x^2} \binom{mp^r - 1}{p^s k_1 + x - 1} \binom{mp^r - p^s k_1 - x}{p^s k_3} \binom{mp^r - 1}{p^s k_6 + x-1} \binom{mp^r - p^s k_6 - x}{p^s k_4} \pmod{p^s} \\
& \equiv 0 \pmod{p^s}.   
\end{aligned}
\end{equation}

If we now apply Lemmas \ref{l1} and \ref{l2} to (\ref{reduce2}), then it suffices to show
\begin{equation} \label{laststep}
\sideset{}{'}\sum_{x=1}^{p^s - 1} \frac{(-1)^x}{x^2} C(\ib{\ell}/p, mp^{r-1}, x/p) \equiv 0 \pmod{p^s}.
\end{equation}
To deduce (\ref{laststep}), our final claim is that
\begin{equation} \label{keyreduce}
\sideset{}{'}\sum_{x=1}^{p^s - 1} \frac{(-1)^x}{x^2} C(\ib{\ell}/p, mp^{r-1}, x/p) \equiv \sideset{}{'}\sum_{x=1}^{p^s - 1} \frac{(-1)^x}{x^2} C(\ib{\ell}/p^{t}, mp^{r-t}, x/p^t) \pmod{p^s}
\end{equation}
for each $1 \leq t \leq s$. We proceed by induction on $t$. The case $t=1$ clearly holds. Suppose (\ref{keyreduce}) is true for $t < s$. If $\lfloor x/p^t \rfloor = \lfloor y/p^t \rfloor$ for some $x$, $y \in S_{p^s}$, then $\lceil x/p^t \rceil = \lceil y/p^t \rceil$ since $x/p^t$, $y/p^t \not \in \mathbb{Z}$ and so $C(\ib{\ell}/p^t, mp^{r-t}, x/p^t) = C(\ib{\ell}/p^t, mp^{r-t}, y/p^t)$. Hence if $\lfloor x/p^t \rfloor$ is fixed, $C(\ib{\ell}/p^t, mp^{r-t}, x/p^t)$ is constant. Also, each $x \in S_{p^s}$ can be written as $x = np^t + y$ where $0 \leq n \leq p^{s-t} - 1$ and $y \in S_{p^t}$. Thus, by Lemma \ref{l3} with $s$ replaced by $s-t$ and the fact that
\begin{equation*}
\sideset{}{'}\sum_{x=np^t + 1}^{np^t + p^t -1} \frac{(-1)^x}{x^2} \equiv (-1)^{np^t} \sideset{}{'}\sum_{x=1}^{p^t-1} \frac{(-1)^x}{x^2} \equiv 0 \pmod{p^t}
\end{equation*}
which follows from Lemma \ref{har} with $s$ replaced by $t$, we have
\begin{align}
\sideset{}{'}\sum_{x=1}^{p^s - 1} \frac{(-1)^x}{x^2} C(\ib{\ell}/p, mp^{r-1}, x/p) & \equiv \sideset{}{'}\sum_{x=1}^{p^s - 1} \frac{(-1)^x}{x^2} C(\ib{\ell}/p^{t}, mp^{r-t}, x/p^t) \pmod{p^s} \label{e1} \\
& \equiv \sideset{}{'}\sum_{n=1}^{p^{s-t} - 1} \left ( \sideset{}{'}\sum_{x=np^t + 1}^{np^t + p^t - 1} \frac{(-1)^x}{x^2} C(\ib{\ell}/p^{t}, mp^{r-t}, x/p^t) \right ) \pmod{p^s} \label{e2} \\
& \equiv \sum_{n=1}^{p^{s-t} - 1} C(\ib{\ell}/p^{t}, mp^{r-t}, x/p^t) \left (  \sideset{}{'}\sum_{x=np^t + 1}^{np^t + p^t - 1} \frac{(-1)^x}{x^2} \right ) \pmod{p^s} \label{e3} \\
& \equiv \sum_{n=1}^{p^{s-t} - 1} C(\ib{\ell}/p^{t+1}, mp^{r-t-1}, x/p^{t+1})  \nonumber \\
& \qquad \qquad \qquad \qquad \qquad \qquad \times \left (  \sideset{}{'}\sum_{x=np^t + 1}^{np^t + p^t - 1} \frac{(-1)^x}{x^2} \right ) \pmod{p^s} \label{e4} \\
& \equiv \sum_{n=1}^{p^{s-t} - 1}  \sideset{}{'}\sum_{x=np^t + 1}^{np^t + p^t - 1} \frac{(-1)^x}{x^2} C(\ib{\ell}/p^{t+1}, mp^{r-t-1}, x/p^{t+1}) \pmod{p^s} \label{e5} \\
& \equiv \sideset{}{'} \sum_{x=1}^{p^s - 1} \frac{(-1)^x}{x^2} C(\ib{\ell}/p^{t+1}, mp^{r-t-1}, x/p^{t+1}) \pmod{p^s}. \label{e6}
\end{align}
Thus, (\ref{keyreduce}) follows by induction on $t$. If we now let $t=s$ in (\ref{keyreduce}), use the fact that if $\lfloor x/p^s \rfloor$ is fixed, $C(\ib{\ell}/p^s, mp^{r-s}, x/p^s)$ is constant and apply Lemma \ref{har}, then
\begin{align}
\sideset{}{'}\sum_{x=1}^{p^s - 1} \frac{(-1)^x}{x^2} C(\ib{\ell}/p, mp^{r-1}, x/p) & \equiv \sideset{}{'}\sum_{x=1}^{p^s - 1} \frac{(-1)^x}{x^2} C(\ib{\ell}/p^s, mp^{r-s}, x/p^s) \pmod{p^s} \label{e7} \\
& \equiv C(\ib{\ell}/p^s, mp^{r-s}, x/p^s) \left ( \sideset{}{'}\sum_{x=1}^{p^s - 1} \frac{(-1)^x}{x^2} \right) \pmod{p^s} \label{e8} \\
& \equiv 0 \pmod{p^s}. \label{e9}
\end{align}
This proves (\ref{laststep}), (\ref{suff}) and thus (\ref{Adelta}). 
\end{proof}

\section{Two questions}

In this section, we briefly address the following two nice questions asked by the referee: are there versions of Theorem \ref{main} for the primes $2$ and $3$ and for an unsigned analogue of (\ref{delta})? Consider the sequence

\begin{equation} \label{nosigndelta}
A^{\prime}_{\delta}(n) =  \sum_{(\ib{a}, \ib{b}, \ib{c}, \ib{d}) \, \in \, U(n)} B^{\prime}(\ib{a}, \ib{b}, \ib{c}, \ib{d}) \, \, := \sum_{(\ib{a}, \ib{b}, \ib{c}, \ib{d}) \, \in \, U(n)} \binom{n}{\ib{a}} \binom{n}{\ib{b}} \binom{n}{\ib{c}} \binom{n}{\ib{d}}   
\end{equation}
where $U(n)$ is given by (\ref{Uset}). The first few terms in this sequence (which do not yet appear in the OEIS) are $1$, $9$, $297$, $13833$, $748521$, $44127009$, $2750141241$, $\dotsc$. 

\begin{theorem} \label{main1} For all primes $p \geq 3$ and integers $m$, $r \geq 1$, we have
\begin{equation} \label{Aprime}
A^{\prime}_{\delta}(mp^r) \equiv A^{\prime}_{\delta}(mp^{r-1}) \pmod{p^{3r}}.
\end{equation}
For $p=2$, $(\ref{Aprime})$ holds modulo $2^{2r}$. Moreover, for $p=3$, $(\ref{Adelta})$ holds and for $p=2$, $(\ref{Adelta})$ holds modulo $2^{2r}$ for $r \geq 2$ and modulo 2 otherwise. 
\end{theorem}

In order to prove Theorem \ref{main1}, we require some setup. The first result follows by induction on $n$ while the second result is well-known.

\begin{lemma} \label{div} For all integers $a \geq 1$ we have 
\begin{equation*}
n! \mid \binom{na}{\, \underbrace{a, a, \dotsc, a}_{\text{$n$ copies}} \, }.
\end{equation*}
\end{lemma}

\begin{lemma} \label{has} For primes $p \geq 5$ and integers $s \geq 0$,
\begin{equation*}
\sideset{}{'}\sum_{x=1}^{p^s - 1} \frac{1}{x^2} \equiv 0 \pmod{p^s}
\end{equation*}
while for $p = 3$,
\begin{equation*}
\sideset{}{'}\sum_{x=1}^{3^s - 1} \frac{3}{x^2} \equiv 0 \pmod{3^s}.
\end{equation*}
\end{lemma}

The next step is to obtain a version of Proposition \ref{first} for $B^{\prime}(\ib{a}, \ib{b}, \ib{c}, \ib{d})$. 

\begin{proposition} \label{second} For all primes $p \geq 2$ and integers $m$, $r \geq 1$ and $(\ib{a}, \ib{b}, \ib{c}, \ib{d}) \in U(mp^r)$ with $p \mid (\ib{a}, \ib{b}, \ib{c}, \ib{d})$, 

\begin{equation} \label{dbyp1}
B^{\prime}(\ib{a}, \ib{b}, \ib{c}, \ib{d}) \equiv B^{\prime}((\ib{a}, \ib{b}, \ib{c}, \ib{d})/p) \pmod{p^{3r - \epsilon}}
\end{equation}
where $\epsilon = 2$, $1$ or $0$ for $p=2$, $p=3$ and primes $p \geq 5$, respectively.
\end{proposition}

\begin{proof}
The proof is the same as that for Proposition \ref{first} except one applies Remark \ref{23} twice and $p^{s_1 + 2s_2}$ is replaced by $p^{s_1 + 2s_2 - \epsilon}$ in (\ref{ratio0}) and (\ref{ratio}).
\end{proof}

\begin{corollary} \label{secondB}
For all primes $p \geq 3$ and integers $m$, $r \geq 1$ and $(\ib{a}, \ib{b}, \ib{c}, \ib{d}) \in U(mp^r)$ with $p \mid (\ib{a}, \ib{b}, \ib{c}, \ib{d})$,
\begin{equation} \label{dbyp2}
B(\ib{a}, \ib{b}, \ib{c}, \ib{d}) \equiv B((\ib{a}, \ib{b}, \ib{c}, \ib{d})/p) \pmod{p^{3r-\epsilon}}
\end{equation}
where $\epsilon = 1$ or $0$ for $p=3$ and primes $p \geq 5$, respectively. For $p = 2$, $(\ref{dbyp2})$ holds modulo $p^{2r-1}$.  
\end{corollary}

\begin{proof}
For primes $p \geq 3$, the signs in $B(\ib{a}, \ib{b}, \ib{c}, \ib{d})$ and $B((\ib{a}, \ib{b}, \ib{c}, \ib{d})/p)$ are equal as the associated exponents have the same parity. Thus, (\ref{dbyp2}) is equivalent to (\ref{dbyp1}). For $p=2$, there are two cases. If $4 \mid (\ib{a}, \ib{b}, \ib{c}, \ib{d})$, then the exponents of $B(\ib{a}, \ib{b}, \ib{c}, \ib{d})$ and $B((\ib{a}, \ib{b}, \ib{c}, \ib{d})/2)$ are both even and the result follows from (\ref{dbyp1}). If $2$ exactly divides $(\ib{a}, \ib{b}, \ib{c}, \ib{d})$, then one observes that $4$ does not divide at least two of $\ib{a}$, $\ib{b}$, $\ib{c}$, $\ib{d}$. It then follows from (\ref{easier}) that $B(\ib{a}, \ib{b}, \ib{c}, \ib{d})$ and $B((\ib{a}, \ib{b}, \ib{c}, \ib{d})/2)$ are both congruent to $0$ modulo $2^{2r-2}$. Taking the signs of $B(\ib{a}, \ib{b}, \ib{c}, \ib{d})$ and $B((\ib{a}, \ib{b}, \ib{c}, \ib{d})/2)$ into account modulo 2 and applying (\ref{dbyp1}) yields
\begin{equation} \label{modpower2}
B(\ib{a}, \ib{b}, \ib{c}, \ib{d}) \equiv B^{\prime}(\ib{a}, \ib{b}, \ib{c}, \ib{d}) \equiv B^{\prime}((\ib{a}, \ib{b}, \ib{c}, \ib{d})/2) \equiv B((\ib{a}, \ib{b}, \ib{c}, \ib{d})/2) \pmod{2^{2r-1}}.
\end{equation}
\end{proof}

For the vector
\begin{equation*}
(\ib{a}, \ib{b}, \ib{c}, \ib{d})  = (a_1, a_2, a_3, b_1, b_2, b_3, c_1, c_2, c_3, d_1, d_2, d_3) =: (k_1, \dotsc, k_{12})
\end{equation*}
and a permutation $\sigma = (\sigma(1), \dotsc, \sigma(12))$ on $\{1, \dotsc, 12 \}$, consider the action 
\begin{equation} \label{action}
\sigma \star (k_1, \dotsc, k_{12}) = (k_{\sigma(1)}, \dotsc, k_{\sigma(12)}).
\end{equation}
If $\sigma$ is invariant on $U(n)$, then so is the cyclic group $\langle \sigma \rangle$. Let $Y$ be a set of representatives of the orbits $\langle \sigma \rangle \ib{x}$, $\ib{x} \in U(n)$, under the action of $\langle \sigma \rangle$ by (\ref{action}). Thus, 
\begin{equation*}
U(n) = \bigsqcup_{\ib{y} \, \in \, Y}\langle\sigma\rangle \ib{y}.
\end{equation*}
Note that $\sigma$ is invariant on $U(n)$ exactly when it is invariant on 
$$V_p := \{(\ib{a}, \ib{b}, \ib{c}, \ib{d}) \in U(mp^r) \,: \, p \mid (\ib{a}, \ib{b}, \ib{c}, \ib{d})\}.$$ 
Also, if $p \mid (\ib{a}, \ib{b}, \ib{c}, \ib{d})$, then $(\ib{a}, \ib{b}, \ib{c}, \ib{d}) \in \langle\sigma\rangle \ib{x}$ exactly when $(\ib{a}, \ib{b}, \ib{c}, \ib{d})/p \in \langle\sigma\rangle (\ib{x}/p)$. Finally $(\ib{a}, \ib{b}, \ib{c}, \ib{d}) \in V_p$ if and only if $(\ib{a}, \ib{b}, \ib{c}, \ib{d})/p \in U(n/p)$ and so for a set of representatives $\mathcal{Y}$ of the orbits $\langle \sigma \rangle \ib{x}$, $\ib{x} \in V_{p}$, under the action of $\langle \sigma \rangle$
\begin{equation} \label{repe}
V_p = \bigsqcup_{\ib{y} \, \in \, \mathcal{Y}}\langle\sigma\rangle \ib{y} \,\,\, \text{if and only if} \,\,\, U(n/p) = \bigsqcup_{\ib{y} \, \in \, \mathcal{Y}}\langle\sigma\rangle (\ib{y}/p).
\end{equation}
Hence we can use the same set of representatives to index a partition of $V_p$ and of $U(n/p)$.
\begin{proposition} \label{third} For all primes $p \geq 3$ and integers $m,$ $r\geq1$
\begin{equation} \label{part1}
\sum_{\substack{(\ib{a}, \ib{b}, \ib{c}, \ib{d}) \, \in \, U(mp^r) \\ p \, 	\mid \, (\ib{a}, \ib{b}, \ib{c}, \ib{d})}} B'(\ib{a}, \ib{b}, \ib{c}, \ib{d}) \equiv \sum_{(\ib{a}, \ib{b}, \ib{c}, \ib{d}) \, \in \, U(mp^{r-1})} B'(\ib{a}, \ib{b}, \ib{c}, \ib{d}) \pmod{p^{3r}}
\end{equation}
and
\begin{equation} \label{part2}
\sum_{\substack{(\ib{a}, \ib{b}, \ib{c}, \ib{d}) \, \in \, U(mp^r) \\ p \, 	\mid \, (\ib{a}, \ib{b}, \ib{c}, \ib{d})}} B(\ib{a}, \ib{b}, \ib{c}, \ib{d}) \equiv \sum_{(\ib{a}, \ib{b}, \ib{c}, \ib{d}) \, \in \, U(mp^{r-1})} B(\ib{a}, \ib{b}, \ib{c}, \ib{d}) \pmod{p^{3r}}.
\end{equation}
For $p=2$, $(\ref{part1})$ holds modulo $2^{2r}$ while $(\ref{part2})$ holds modulo $2^{2r}$ for $r \geq 2$ and modulo $2$ otherwise.
\end{proposition}

\begin{proof}
For primes $p \geq 5$, (\ref{part1}) and (\ref{part2}) follow directly from Proposition \ref{second} and Corollary \ref{secondB}, respectively. For $p=3$, one can check that
the permutation $\sigma_1 =  (2,3,1,7,8,9,10,12,11,4,6,5)$ satisfies $B'(\sigma_1\star(\ib{a}, \ib{b}, \ib{c}, \ib{d})) = B'(\ib{a}, \ib{b}, \ib{c}, \ib{d})$. For a set of representatives $\mathcal{Y}_1$ of the orbits of $V_3$ under the action of $\langle \sigma_1 \rangle$, we have
\begin{equation} \label{simple}
\sum_{(\ib{a}, \ib{b}, \ib{c}, \ib{d}) \, \in \, V_3} B^{\prime}(\ib{a}, \ib{b}, \ib{c}, \ib{d}) = \sum_{\ib{y} \, \in \, \mathcal{Y}_1}\sum_{(\ib{a}, \ib{b}, \ib{c}, \ib{d}) \, \in \, \langle\sigma_1\rangle \ib{y}} B^{\prime}(\ib{a}, \ib{b}, \ib{c}, \ib{d}) = \sum_{\ib{y} \, \in \, \mathcal{Y}_1}|\langle\sigma_1\rangle \ib{y}| B^{\prime}(\ib{y}).
\end{equation}
As $|\sigma_1| = 3$ and $|\langle\sigma_1\rangle \ib{y}|$ divides $|\sigma_1|$, $|\langle\sigma_1\rangle \ib{y}| \in \{1,3\}$. If $|\langle\sigma_1\rangle \ib{y}| = 1$ then $\sigma_1\star(\ib{a}, \ib{b}, \ib{c}, \ib{d}) = (\ib{a}, \ib{b}, \ib{c}, \ib{d})$ and so $b_1=c_1=d_1$. Thus, $\binom{m 3^r}{b_1,c_1,d_1}$ is of the form $\binom{3a}{a,a,a}$ for some integer $a \geq 1$ and by Lemma \ref{div} we have $3 \mid \binom{m 3^r}{b_1,c_1,d_1}$. Expressing $B^{\prime}(\ib{a}, \ib{b}, \ib{c}, \ib{d})$ as in (\ref{form}) yields
\begin{equation*}
3\binom{m 3^r}{a_1} \binom{m 3^r}{a_2} \binom{m 3^r}{a_3} \mid B'(\ib{a}, \ib{b}, \ib{c}, \ib{d})
\end{equation*}
and so by (\ref{easier}) 
\begin{equation} \label{here}
B^{\prime}(\ib{a}, \ib{b}, \ib{c}, \ib{d}) \equiv 0 \pmod{3^{3r-s_1-2s_2+1}}.
\end{equation}
By the analogue of (\ref{ratio}) for $B^{\prime}(\ib{a}, \ib{b}, \ib{c}, \ib{d})$ when $p = 3$, where $3^{s_1 + 2s_2}$ is replaced by $3^{s_1 + 2s_2 - 1}$, and (\ref{here}), we have
\begin{equation*}
B'(\ib{a}, \ib{b}, \ib{c}, \ib{d}) \equiv B'((\ib{a}, \ib{b}, \ib{c}, \ib{d})/3) \pmod{3^{3r}}.
\end{equation*}
Thus, 
\begin{equation} \label{case1}
|\langle\sigma_1\rangle \ib{y}|B'(\ib{y}) \equiv |\langle\sigma_1\rangle (\ib{y}/3)|B'(\ib{y}/3) \pmod{3^{3r}}.
\end{equation}
If $|\langle\sigma_1\rangle \ib{y}| = 3$, then since $B'(\ib{y}) \equiv B'(\ib{y}/3) \pmod{3^{3r-1}}$ by Proposition \ref{second}, we again have (\ref{case1}).
By (\ref{simple}) and (\ref{case1}), 
\begin{align*}
\sum_{(\ib{a}, \ib{b}, \ib{c}, \ib{d}) \, \in \, V_3} B'(\ib{a}, \ib{b}, \ib{c}, \ib{d})  = \sum_{\ib{y} \, \in \, \mathcal{Y}_1}|\langle\sigma_1\rangle \ib{y}|B'(\ib{y})  &\equiv \sum_{\ib{y} \, \in \, \mathcal{Y}_1}|\langle\sigma_1\rangle (\ib{y}/3)|B'(\ib{y}/3) \pmod{3^{3r}}\\
&\equiv \sum_{\ib{y} \, \in \, \mathcal{Y}_1} \sum_{(\ib{a}, \ib{b}, \ib{c}, \ib{d}) \, \in \, \langle \sigma_1 \rangle (\ib{y}/3)} B'(\ib{a}, \ib{b}, \ib{c}, \ib{d}) \pmod{3^{3r}} \\
&=  \sum_{(\ib{a}, \ib{b}, \ib{c}, \ib{d}) \, \in \, U(m 3^{r-1})} B'(\ib{a}, \ib{b}, \ib{c}, \ib{d}) \pmod{3^{3r}}
\end{align*}
where the final line follows from (\ref{repe}). This proves (\ref{part1}) for $p=3$. The proof of (\ref{part2}) for $p=3$ is the same using Corollary \ref{secondB}. For $p=2$, the permutation $\sigma_2 = (6,4,5,9,8,7,12,10,11,3,2,1)$ satisfies $B'(\sigma_2\star(\ib{a}, \ib{b}, \ib{c}, \ib{d})) = B'(\ib{a}, \ib{b}, \ib{c}, \ib{d})$. Similar to (\ref{simple}),
\begin{equation} \label{simple1}
\sum_{(\ib{a}, \ib{b}, \ib{c}, \ib{d}) \, \in \, V_2} B'(\ib{a}, \ib{b}, \ib{c}, \ib{d}) = \sum_{\ib{y} \, \in \, \mathcal{Y}_2} \sum_{(\ib{a}, \ib{b}, \ib{c}, \ib{d}) \, \in \, \langle\sigma_2\rangle \ib{y}} B'(\ib{a}, \ib{b}, \ib{c}, \ib{d}) = \sum_{\ib{y} \, \in \, \mathcal{Y}_2} |\langle \sigma_2 \rangle \ib{y}| B'(\ib{y})
\end{equation}
where $\mathcal{Y}_2$ is a set of representatives of the orbits of $V_2$ under the action of $\langle \sigma_2 \rangle$. As $| \sigma_2 | = 4$ and $| \langle\sigma_2\rangle \ib{y} |$ divides $| \sigma_2 |$, $| \langle\sigma_2\rangle \ib{y} | \in \{1,2,4\}$. Proposition \ref{second} implies
\begin{equation*}
B'(\ib{a}, \ib{b}, \ib{c}, \ib{d}) \equiv B'((\ib{a}, \ib{b}, \ib{c}, \ib{d})/2) \pmod{2^{2r-1}}
\end{equation*}
and so if $|\langle\sigma_2\rangle \ib{y}| \in \{2,4\}$, it follows 
\begin{equation} \label{case3}
|\langle\sigma_2\rangle \ib{y}|B'(\ib{y}) \equiv |\langle\sigma_2\rangle (\ib{y}/2)|B'(\ib{y}/2) \pmod{2^{2r}}.
\end{equation}
When $|\langle\sigma_2\rangle \ib{y}| = 1$, more care is required. In this case, $\sigma_2 \star(\ib{a}, \ib{b}, \ib{c}, \ib{d}) = (\ib{a}, \ib{b}, \ib{c}, \ib{d})$ and so
\begin{equation*}
(a_1, a_2, a_3, b_1, b_2, b_3, c_1, c_2, c_3, d_1, d_2, d_3) = (a, b, c, b, c, a, a, c, b, c, b, a) := \ib{v}(a,b,c)
\end{equation*}
for some integers $a$, $b$, $c \geq 0$ with $a+b+c = m 2^r$. So, $B'(\ib{y}) = \binom{m 2^r}{a,b,c}^4$. There are now three cases. If $a=b=c$, then Lemma \ref{div} yields $2 \mid \binom{m 2^r}{a,a,a}$ and, as before, we have (\ref{case3}). If exactly two of $a$, $b$, $c$ are equal, say $a=b$, then 
\begin{equation*}
\binom{m 2^r}{a,a,b} = \binom{m 2^r}{b}\binom{m2^r-b}{a,a}
\end{equation*}
where $2a = m 2^r - b$. We now have two cases. If $a=0$, then $m 2^r = b$ and so 
\begin{equation} \label{a0}
B'(\ib{y}) = 1 = B'(\ib{y}/2). 
\end{equation}
Otherwise, if $a\geq1$, then $2 \mid \binom{m 2^r-b}{a,a}$ implies that $2 \mid \binom{mp^r}{a,a,b}$ and so, as before, (\ref{case3}) follows. Finally, if $a$, $b$ and $c$ are distinct, then each of the six permutations of $(a,b,c)$ in the argument of $\ib{v}(a,b,c)$ corresponds to a distinct element in $V_2$ whose orbit under the action of $\langle \sigma_2 \rangle$ is a singleton set and so correspond to distinct elements in $\mathcal{Y}_2$. In total, 
\begin{align*}
\sum_{\substack{\ib{y} \, \in \, \mathcal{Y}_2 \\ |\langle\sigma_2\rangle \ib{y}| \, = \, 1}} B'(\ib{y}) &= \sum_{\ib{v}(a,b,c) \, \in \, \mathcal{Y}_2} B'(\ib{y})  = \sum_{\substack{\ib{v}(a,b,c) \, \in \, \mathcal{Y}_2 \\ a,b,c \,\, \text{not all distinct}}} B'(\ib{y}) \,\, + \sum_{\substack{\ib{v}(a,b,c) \, \in \, \mathcal{Y}_2 \\ a, b, c\,\, \text{distinct}}} B'(\ib{y}) \\
&= \sum_{\substack{\ib{v}(a,b,c) \, \in \, \mathcal{Y}_2 \\ a,b,c \,\, \text{not all distinct}}} B'(\ib{y}) \,\, + 6\sum_{\substack{\ib{v}(a,b,c) \, \in \, \mathcal{Y}_2 \\ a \, > \, b \, > \, c}} B'(\ib{y}) \\
&= \sum_{\substack{\ib{v}(a,b,c) \, \in \, \mathcal{Y}_2 \\  a,b,c \,\, \text{not all distinct}}} B'(\ib{y}) \,\, + 3\sum_{\substack{\ib{v}(a,b,c) \, \in \, \mathcal{Y}_2 \\ a \, > \, b \, > \, c}} 2B'(\ib{y}) \\
&\equiv \sum_{\substack{\ib{v}(a,b,c) \, \in \, \mathcal{Y}_2 \\ a,b,c \,\, \text{not all distinct}}} B'(\ib{y}/2) \,\, + 3\sum_{\substack{\ib{v}(a,b,c) \, \in \, \mathcal{Y}_2 \\ a \, > \, b \, > \, c}} 2B'(\ib{y}/2) \pmod{2^{2r}} \\
&\equiv  \sum_{\substack{\ib{y} \, \in \, \mathcal{Y}_2 \\ |\langle\sigma_2\rangle (\ib{y}/2)| = 1}} B'(\ib{y}/2)  \pmod{2^{2r}}.
\end{align*}
Hence, by (\ref{simple1}) and (\ref{case3})
\begin{align*}
\sum_{(\ib{a}, \ib{b}, \ib{c}, \ib{d}) \, \in \, V_2} B'(\ib{a}, \ib{b}, \ib{c}, \ib{d}) &=  \sum_{\substack{\ib{y} \, \in \, \mathcal{Y}_2 \\ |\langle\sigma_2\rangle \ib{y}| \, \neq \, 1}} B'(\ib{y})  \,\, + \sum_{\substack{\ib{y} \, \in \, \mathcal{Y}_2 \\ |\langle\sigma_2\rangle \ib{y}| \, = \, 1}} B'(\ib{y}) \\
&\equiv \sum_{\substack{\ib{y} \, \in \, \mathcal{Y}_2 \\ |\langle\sigma_2\rangle (\ib{y}/2)| \, \neq \, 1}} B'(\ib{y}/2)  \,\, + \sum_{\substack{\ib{y} \, \in \, \mathcal{Y}_2 \\ |\langle\sigma_2\rangle (\ib{y}/2)| \, = \, 1}} B'(\ib{y}/2) \pmod{2^{2r}}\\
&\equiv \sum_{\ib{y} \, \in \,  \mathcal{Y}_2}\sum_{(\ib{a}, \ib{b}, \ib{c}, \ib{d}) \, \in \, \langle\sigma_2\rangle (\ib{y}/2)} B'(\ib{a}, \ib{b}, \ib{c}, \ib{d}) \pmod{2^{2r}} \\
&\equiv  \sum_{(\ib{a}, \ib{b}, \ib{c}, \ib{d}) \, \in \, U(m 2^{r-1})} B'(\ib{a}, \ib{b}, \ib{c}, \ib{d}) \pmod{2^{2r}}.
\end{align*}
This proves (\ref{part1}) for $p=2$. One can check that the proof of (\ref{part2}) for $p=2$ and $r \geq 2$ is similar using the fact that $\sigma_2$ satisfies $B(\sigma_2 \star (\ib{a}, \ib{b}, \ib{c}, \ib{d})) = B(\ib{a}, \ib{b}, \ib{c}, \ib{d})$, Corollary 4.5 and that (\ref{a0}) still holds in this case. Taking $r=1$ in (\ref{modpower2}) then completes the result.
\end{proof}

We can now prove Theorem \ref{main1}.

\begin{proof}[Proof of Theorem \ref{main1}]
By splitting the sum in (\ref{nosigndelta}) as in the beginning of the proof of Theorem 1.2 and applying Proposition \ref{third}, it suffices to prove 
\begin{equation} \label{last}
\sum_{\substack{(\ib{a}, \ib{b}, \ib{c}, \ib{d}) \, \in \, U(mp^r) \\ p \, \nmid \, (\ib{a}, \ib{b}, \ib{c}, \ib{d})}} B'(\ib{a}, \ib{b}, \ib{c}, \ib{d}) \,\, \equiv \sum_{\substack{(\ib{a}, \ib{b}, \ib{c}, \ib{d}) \, \in \, U(mp^r) \\ p \, \nmid \, (\ib{a}, \ib{b}, \ib{c}, \ib{d})}} B(\ib{a}, \ib{b}, \ib{c}, \ib{d}) \equiv 0 \pmod{p^{3r}}
\end{equation}
for all primes $p \geq 3$ and integers $m,$ $r\geq1$ while for $p = 2$ (\ref{last}) holds modulo $2^{2r}$. Following the proof of Theorem \ref{main}, we have $p^{2r}$ divides each summand in (\ref{last}) and so the result for $p=2$ is true. For $p \geq 3$ and the sum involving $B(\ib{a}, \ib{b}, \ib{c}, \ib{d})$ in (\ref{last}), one follows the argument as in the proof of Theorem \ref{main}. For $p \geq 5$ and the sum involving $B^{\prime}(\ib{a}, \ib{b}, \ib{c}, \ib{d})$, the only difference is that we need to replace (\ref{keyreduce}) with
\begin{equation*} \label{reduce4}
\sideset{}{'}\sum_{x=1}^{p^s - 1} \frac{1}{x^2} C(\ib{\ell}/p, mp^{r-1}, x/p) \equiv \sideset{}{'}\sum_{x=1}^{p^s - 1} \frac{1}{x^2} C(\ib{\ell}/p^{t}, mp^{r-t}, x/p^t) \pmod{p^s}.
\end{equation*}
By Lemma \ref{has}, each of (\ref{e1})--(\ref{e9}) holds where $(-1)^x$ is replaced with $1$ and so the result follows. Finally, for $p=3$ and the sum involving $B^{\prime}(\ib{a}, \ib{b}, \ib{c}, \ib{d})$, we recall that
\begin{equation*} \label{remain0}
\sum_{\substack{(\ib{a}, \ib{b}, \ib{c}, \ib{d}) \, \in \, U(m 3^r) \\ 3 \, \nmid \, (\ib{a}, \ib{b}, \ib{c}, \ib{d})}} B'(\ib{a}, \ib{b}, \ib{c}, \ib{d}) \equiv 6 \left (\sum_{\substack{(\ib{a}, \ib{b}, \ib{c}, \ib{d}) \, \in \, U_{\ib{a} \ib{b}}(m 3^r)}} B'(\ib{a}, \ib{b}, \ib{c}, \ib{d}) \right) \pmod{3^{3r}}
\end{equation*}
and so it is enough to demonstrate
\begin{equation*} \label{remain1}
2 \left (\sum_{\substack{(\ib{a}, \ib{b}, \ib{c}, \ib{d}) \, \in \, U_{\ib{a} \ib{b}}(m 3^r)}} 3B'(\ib{a}, \ib{b}, \ib{c}, \ib{d}) \right) \equiv 0 \pmod{3^{3r}}.
\end{equation*}
Similar to the proof of Theorem \ref{main}, it then suffices to prove
\begin{equation*} \label{reduce3}
\sideset{}{'}\sum_{x=1}^{3^s - 1} \frac{3}{x^2} C(\ib{\ell}/3, m 3^{r-1}, x/3) \equiv \sideset{}{'}\sum_{x=1}^{3^s - 1} \frac{3}{x^2} C(\ib{\ell}/3^{t}, m 3^{r-t}, x/3^t) \pmod{3^s}.
\end{equation*}
By Lemma \ref{has}, each of (\ref{e1})--(\ref{e9}) holds where $(-1)^x$ is replaced with $3$. Thus, the result for $p=3$ is true.
\end{proof}

\section*{Acknowledgements} 
The authors would like to again thank Armin Straub for his permission to include the proof of Proposition \ref{first} and for helpful comments and suggestions. The second author would like to thank the Okinawa Institute of Science and Technology for their hospitality and support during his visit from August 28 to September 15, 2023 as part of the TSVP Thematic Program ``Exact Asymptotics: From Fluid Dynamics to Quantum Geometry". The second author was partially funded by the Irish Research Council Advanced Laureate Award IRCLA/2023/1934. Finally, the authors are very grateful to the referee for their extremely careful reading of this paper.

\section*{Data availability}
Data sharing is not applicable to this article as no datasets were generated or analysed during the current study.


\begin{thebibliography}{999}

\bibitem{az}
G. Almkvist, W. Zudilin, \emph{Differential equations, mirror maps and zeta values}, Mirror symmetry. V, 481--515. AMS/IP Stud. Adv. Math., 38, American Mathematical Society, Providence, RI, 2006.

\bibitem{at}
T. Amdeberhan, R. Tauraso, \emph{Supercongruences for the Almkvist-Zudilin numbers}, Acta Arith. \textbf{173} (2016), no. 3, 255--268.

\bibitem{bv}
F. Beukers, M. Vlasenko, \emph{Dwork Crystals III: From excellent Frobenius lifts towards supercongruences}, Int. Math. Res. Not. IMRN (2023), no. 23, 20433--20483.

\bibitem{cooper1}
S. Cooper, \emph{Sporadic sequences, modular forms and new series for $1/\pi$}, Ramanujan J. \textbf{29} (2012), no. 1-3, 163--183.

\bibitem{cooper2}
S. Cooper, \emph{Ap{\'e}ry-like sequences defined by four-term recurrence relations}, preprint available at \url{https://arxiv.org/abs/2302.00757}

\bibitem{coster}
M. Coster, Supercongruences, Ph.D. thesis, Universiteit Leiden, 1988.

\bibitem{gessel}
I. Gessel, \emph{Some congruences for generalized Euler numbers}, Canad. J. Math. \textbf{35} (1983), no. 4, 687--709.

\bibitem{gor1}
O. Gorodetsky, \emph{$q$-congruences, with applications to supercongruences and the cyclic sieving phenomenon}, Int. J. Number Theory \textbf{15} (2019), no. 9, 1919--1968.

\bibitem{gor2}
O. Gorodetsky, \emph{New representations for all sporadic Ap{\'e}ry-like sequences, with applications to congruences}, Exp. Math. \textbf{32} (2023), no. 4, 641--656.

\bibitem{mos}
D. McCarthy, R. Osburn and A. Straub, \emph{Sequences, modular forms and cellular integrals}, Math. Proc. Cambridge Philos. Soc. \textbf{168} (2020), no. 2, 379--404.

\bibitem{os}
R. Osburn, B. Sahu, \emph{Congruences via modular forms}, Proc. Amer. Math. Soc. \textbf{139} (2011), no. 7, 2375--2381.

\bibitem{os2}
R. Osburn, B. Sahu, \emph{Supercongruences for Ap{\'e}ry-like numbers}, Adv. in Appl. Math. \textbf{47} (2011), no. 3, 631--638.

\bibitem{os3}
R. Osburn, B. Sahu, \emph{A supercongruences for generalized Domb numbers}, Funct. Approx. Comment. Math. \textbf{48} (2013), 29--36.

\bibitem{oss}
R. Osburn, B. Sahu and A. Straub, \emph{Supercongruences for sporadic sequences}, Proc. Edinb. Math. Soc. (2) \textbf{59} (2016), no. 2, 503--518.

\bibitem{straub}
A. Straub, \emph{Gessel-Lucas congruences for sporadic sequences}, Monatsh. Math., to appear.

\bibitem{zag}
D. Zagier, \emph{Integral solutions of Ap{\'e}ry-like recurrence equations}, Groups and symmetries, 349--366, CRM Proc. Lecture Notes, \textbf{47}, Amer. Math. Soc., Providence, RI, 2009.

\bibitem{zag1}
D. Zagier, \emph{The arithmetic and topology of differential equations}, European Congress of Mathematics, 717--776, Eur. Math. Soc., Z{\"u}rich, 2018.

\end{thebibliography}
\end{document}